\newif\ifArXiV
\newenvironment{frontmatter}{}{}
\let\address\affil
\newenvironment{keyword}{\small \textbf{Keywords: }}{}
\newcommand{\comments}[1]{}
\newtheorem{theorem}{Theorem}
\newtheorem{definition}{Definition}
\newtheorem{proposition}{Proposition}
\DeclareMathOperator*{\argmin}{arg\,min}
\begin{document}

\begin{frontmatter}

\title{A matheuristic for solving the single row facility layout problem}

\ifArXiV
\author[1]{Thomas Pammer\thanks{tpammer@faw.jku.at}}
\author[2]{Markus Sinnl\thanks{markus.sinnl@jku.at}}
\date{}
\else
\author[1]{Thomas Pammer}
\ead{tpammer@faw.jku.at}
\author[2]{Markus Sinnl}
\ead{markus.sinnl@jku.at}
\fi

\address[1]{Institute for Application-Oriented Knowledge Processing (FAW), Johannes Kepler University Linz, Austria}
\address[2]{Institute of Business Analytics and Technology Transformation/JKU Business School, Johannes Kepler University Linz, Austria}
\ifArXiV
\maketitle	
\fi
\begin{abstract}
The single row facility layout problem (SRFLP) is a well-studied NP-hard 
combinatorial optimization problem with applications in manufacturing and 
logistics systems.
In the SRFLP, a set of facilities with lengths is given, as well as weights 
between each 
pair of facilities. The facilities must be arranged on a line, such that the sum 
of the weighted 
center-to-center distances is minimized.
In this work, we introduce a novel matheuristic approach that integrates exact 
optimization into a metaheuristic framework based on simulated annealing to 
effectively solve large-scale SRFLP instances.
Specifically, we propose the window approach matheuristic, which solves 
subsegments of the layout to optimality using mixed-integer programming while 
preserving the ordering of facilities outside the window. 
%
To the best of our knowledge, this constitutes the first matheuristic approach specifically designed for the SRFLP.
We evaluate the performance of our method on the widely-used benchmark instance 
sets from literature. The computational results
demonstrate that our matheuristic improves the best-known solution values for 17 
of 70 instances, and matches the best-known solution values for the remaining 53 
instances, outperforming current state-of-the-art metaheuristics.
\end{abstract}

\ifArXiV

\begin{keyword}
Matheuristic, Integer Programming, Multi-start Simulated Annealing, Facility Layout
\end{keyword}
\else

\begin{keyword}
Matheuristic \sep Integer Programming \sep Multi-start Simulated Annealing \sep Facility Layout
\end{keyword}
\fi

\end{frontmatter}

\section{Introduction}
The \emph{single row facility layout problem (SRFLP)}, is a widely studied and 
prominent variant of the 
\emph{linear ordering problem} \citep{marti2011linear}.
The SRFLP was first introduced by~\citet{simmons1969one}, is NP-hard~\citep{amaral2013polyhedral} and has a wide range of 
applications, including the arrangement of rooms in hospitals and 
offices~\citep{simmons1969one}, warehouse layout design~\citep{picard1981one}
and flexible manufacturing systems~\citep{heragu1988machine}.
The SRFLP is defined as follows~\citep{simmons1969one}.

\begin{definition}
An instance of the SRFLP is defined by a set of $n$ facilities, each with a 
positive integer length $\ell_i$ for $i \in \{1, \dots, n\}$,
and a symmetric matrix of positive integer weights $w_{ij}$, representing the interaction cost between each facility pair $(i,j)$.
A feasible solution is any permutation $\pi$ of the facilities placed sequentially on a one-dimensional line. The objective is to find a permutation $\pi^*$ that minimizes the total
weighted center-to-center distance between all facility pairs, which is defined as:
\begin{equation}
    F(\pi) = \min \quad \sum_{i=1}^{n-1}\sum_{j=i+1}^{n} w_{ij} d^{\pi_{ij}},
    \label{eq: objective_theory}
\end{equation}
where $d^{\pi_{ij}}$ denotes the center-to-center distance between facilities $i$ and $j$ in a given permutation $\pi$. The symmetry of the weight matrix allows limiting the summation to $j > i$.
\end{definition}

As originally stated by~\citet{simmons1969one}, the center-to-center distance between two facilities corresponds to the sum of half their respective lengths and the total
length of all facilities located between them. For any pair of indices $i < j$, 
this can be calculated as (see, e.g.,~\citep{palubeckis2017single}):
\begin{equation}
    d^{\pi_{ij}} = \frac{\ell_{\pi_i}}{2} + \sum_{i<k<j} \ell_{\pi_k} + \frac{\ell_{\pi_j}}{2},
    \label{eq: dist_theory}
\end{equation}
where $\pi_i$ refers to the facility placed at position $i$ in the current solution.

In figure \ref{fig:weights_and_visuals} an exemplary instance of the SRFLP is 
given together with an illustration of the computation of the objective function 
value (i.e., the weighted distances computation).

\begin{figure}[ht]
\centering

\begin{minipage}[t]{0.30\textwidth}
\centering
\begin{subfigure}[t]{\linewidth}
\begin{tabular}{cccccc}
\hline
\multicolumn{6}{c}{$\ell = (3,\ 2,\ 1,\ 4,\ 2,\ 1)$} \\
\hline
\vspace{0.2cm}\\
\hline
0 & 1 & 4 & 3 & 2 & 1 \\
1 & 0 & 3 & 1 & 3 & 2 \\
4 & 3 & 0 & 1 & 0 & 1 \\
3 & 1 & 1 & 0 & 2 & 2 \\
2 & 3 & 0 & 2 & 0 & 3 \\
1 & 2 & 1 & 2 & 3 & 4 \\
\hline
\end{tabular}
\caption*{Weights matrix}
\label{tab:weigths}
\caption{Instance description}
\end{subfigure}
\end{minipage}
\hfill
\begin{minipage}[t]{0.68\textwidth}
\centering
\begin{subfigure}[t]{\linewidth}
\centering
\begin{tikzpicture}[node distance= 0.2cm, >={Stealth[round]}, thick]
    \draw[draw=black] (0.5,0) rectangle ++(2.25, 0.75) node[pos=.5]{} ;
    \draw[draw=black] (2.75,0) rectangle ++(1.5, 0.75) node[pos=.5]{};
    \draw[draw=black] (4.25,0) rectangle ++(0.75, 0.75) node[pos=.5]{};
    \draw[draw=black] (5,0) rectangle ++(3, 0.75) node[pos=.5]{};
    \draw[draw=black] (8,0) rectangle ++(1.5, 0.75) node[pos=.5]{};
    \draw[draw=black] (9.5,0) rectangle ++(0.75, 0.75) node[pos=.5]{};
    \node[] at (1.625, 0.25) {1};
    \node[] at (3.5, 0.25) {2};
    \node[] at (4.625, 0.25) {3};
    \node[] at (6.5, 0.25) {4};
    \node[] at (8.75, 0.25) {5};
    \node[] at (9.875, 0.25) {6};
    \draw[-Latex, semithick] (1.625, 0.75) arc[radius= 1.35, start angle= 135, end angle=45];
    \draw[-Latex, semithick] (1.625, 0.75) arc[radius= 2.1213, start angle= 135, end angle=45];
    \draw[-Latex, semithick] (1.625, 0.75) arc[radius= 3.44715, start angle= 135, end angle=45];
    \draw[-Latex, semithick] (1.625, 0.75) arc[radius= 5.03775, start angle= 135, end angle=45];
    \draw[-Latex, semithick] (1.625, 0.75) arc[radius= 5.8335, start angle= 135, end angle=45];
    \node[] at (3.6, 1.1) {$2.5 {\cdot} 1$};
    \node[] at (4.6, 1.1) {$4 {\cdot} 4$};
    \node[] at (6.75, 1.1) {$6.5 {\cdot} 3$};
    \node[] at (8.95, 1.1) {$9.5 {\cdot} 2$};
    \node[] at (10, 1.1) {$11 {\cdot} 1$};
\end{tikzpicture}
\caption{Computation of weighted distances for all pairs containing the first facility}
\label{fig:comp_w_dist_1_fac}
\end{subfigure}

\vspace{0.5cm}

\begin{subfigure}[t]{\linewidth}
\centering
\begin{tikzpicture}[node distance= 0.2cm, >={Stealth[round]}, thick]
    \draw[draw=black] (0.5,0) rectangle ++(2.25, 0.75) node[pos=.5]{} ;
    \draw[draw=black] (2.75,0) rectangle ++(1.5, 0.75) node[pos=.5]{};
    \draw[draw=black] (4.25,0) rectangle ++(0.75, 0.75) node[pos=.5]{};
    \draw[draw=black] (5,0) rectangle ++(3, 0.75) node[pos=.5]{};
    \draw[draw=black] (8,0) rectangle ++(1.5, 0.75) node[pos=.5]{};
    \draw[draw=black] (9.5,0) rectangle ++(0.75, 0.75) node[pos=.5]{};
    
    \node[] at (1.625, 0.25) {1};
    \node[] at (3.5, 0.25) {2};
    \node[] at (4.625, 0.25) {3};
    \node[] at (6.5, 0.25) {4};
    \node[] at (8.75, 0.25) {5};
    \node[] at (9.875, 0.25) {6};

    \draw[-Latex, semithick] (1.625, 0.75) arc[radius= 1.35, start angle= 135, end angle=45];
    \draw[-Latex, semithick] (1.625, 0.75) arc[radius= 2.1213, start angle= 135, end angle=45];
    \draw[-Latex, semithick] (1.625, 0.75) arc[radius= 3.44715, start angle= 135, end angle=45];
    \draw[-Latex, semithick] (1.625, 0.75) arc[radius= 5.03775, start angle= 135, end angle=45];
    \draw[-Latex, semithick] (1.625, 0.75) arc[radius= 5.8335, start angle= 135, end angle=45];
    
    \draw[-Latex, semithick, green, dashed] (3.5, 0) arc[radius= 0.795495, start angle= 225, end angle=315];
    \draw[-Latex, semithick, green, dashed] (3.5, 0) arc[radius=2.1213, start angle= 225, end angle=315];
    \draw[-Latex, semithick, green, dashed] (3.5, 0) arc[radius= 3.712275, start angle= 225, end angle=315];
    \draw[-Latex, semithick, green, dashed] (3.5, 0) arc[radius= 4.5078, start angle= 225, end angle=315];
    \draw[-Latex, semithick, red, dotted] (4.625, 0.75) arc[radius= 1.325828, start angle= 135, end angle=45];
    \draw[-Latex, semithick, red, dotted] (4.625, 0.75) arc[radius= 2.91675, start angle= 135, end angle=45];
    \draw[-Latex, semithick, red, dotted] (4.625, 0.75) arc[radius= 3.712313, start angle= 135, end angle=45];
    \draw[-Latex, semithick, blue, dash pattern=on 8pt off 1pt] (6.5, 0) arc[radius= 1.59099, start angle= 225, end angle=315];
    \draw[-Latex, semithick, blue, dash pattern=on 8pt off 1pt] (6.5, 0) arc[radius= 2.386485, start angle= 225, end angle=315];
    \draw[-Latex, semithick, orange, dash pattern=on 16pt off 1pt] (8.75, 0.75) arc[radius= 0.795495, start angle= 135, end angle=45];
\end{tikzpicture}
\caption{Computation of weighted distances for all pairs of facilities}
\label{fig:comp_all_w_dist}
\end{subfigure}
\end{minipage}

\caption{Weights matrix and visual representation of total weighted distance computations}
\label{fig:weights_and_visuals}
\end{figure}
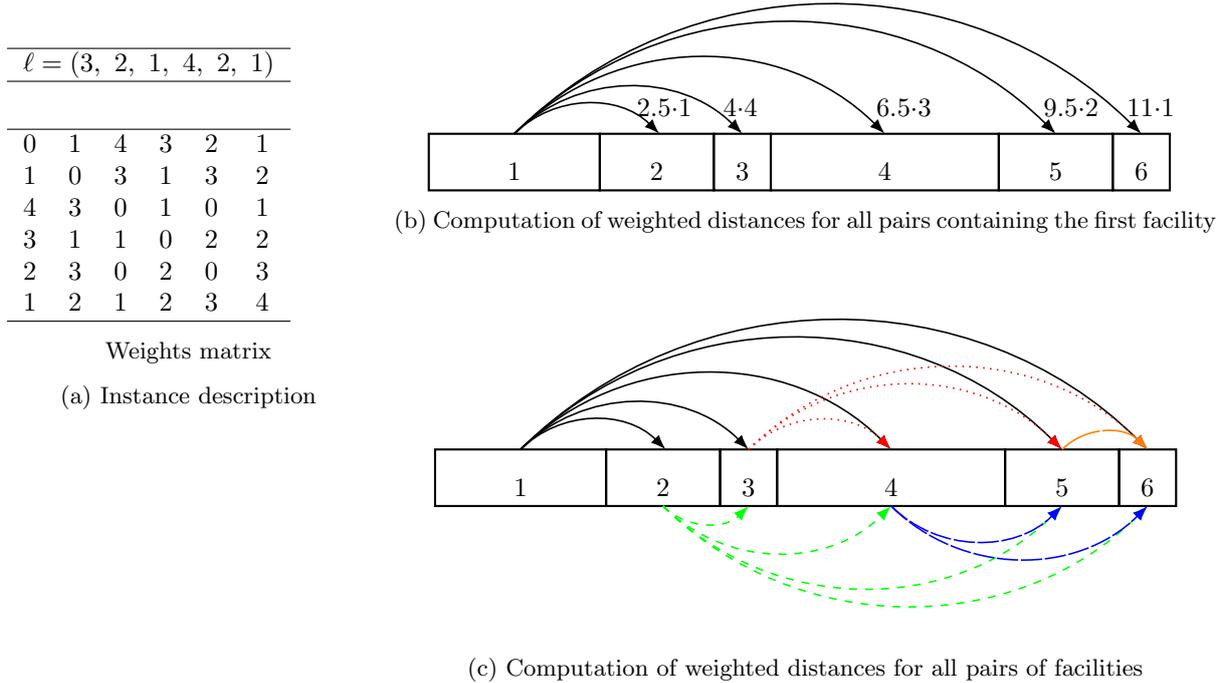

Figures~\ref{fig:comp_w_dist_1_fac} and~\ref{fig:comp_all_w_dist} depict the step-by-step calculation of the objective value,
initially by considering all pairs involving the first facility and then extending to all possible facility pairs.
In the illustrated example, the total weighted sum of all pairwise distances results in an objective value of 143.5.
The goal of the SRFLP is to identify the facility permutation that minimizes this total weighted distances.

\paragraph{Contribution and outline}

Given the high difficulty of the SRFLP, matheuristic approaches 
could represent a promising strategy for obtaining high-quality solutions within 
practical time constraints.
Matheuristic algorithms, which integrate metaheuristic strategies with exact 
mathematical programming components,
have recently gained traction for solving NP-hard problems 
\citep{maniezzo2021matheuristics}.
They combine the global exploration power of metaheuristics with the accuracy of 
exact methods applied to solve subproblems. Surprisingly, despite the SRFLP's 
relevance and difficulty,
no matheuristic approach has yet been proposed for this problem.
This work aims to fill this gap by developing a novel matheuristic algorithm that 
integrates exact optimization techniques into a metaheuristic framework,
thereby aiming to improve upon current state-of-the-art approaches for the problem.

Specifically, we integrate a \emph{mixed-integer programming (MIP)} model
into a metaheuristic framework by introducing a technique which we denote as the 
\emph{window approach}. This method allows us to solve selected subsections (i.e., 
windows) of the 
SRFLP layout to optimality,
while fixing the positions of the remaining facilities. This hybridization enables the exploitation of problem-specific structure without compromising computational efficiency.
Our algorithm starts with a \emph{multi-start simulated annealing (MSA)} phase 
that generates diverse and high-quality solutions. These solutions are 
subsequently refined through
two \emph{local search} procedures and the proposed window approach. The window 
approach preserves the global permutation outside the selected window,
while optimally reordering the facilities within it via an exact MIP model.
In a computational study on benchmark instances from literature, we demonstrate 
consistent improvements over the current 
state-of-the-art metaheuristic by \citet{tang2022solving}, improving the 
best-known solution value for 17 instances and matching the best-known solution 
value  for the remaining 53 instances. We note that the presented work is an 
extension of the Master thesis \citep{pammer2025matheuristic} of the first author of this work.

The rest of this paper is structured as follows:
Section~\ref{sec: pref_rel_work} provides an overview of the previous and related 
work on the SRFLP, categorizing them into metaheuristic and exact approaches. 
Moreover, we also give a short general overview on matheuristics and discuss their 
relation to our presented window approach.
Section~\ref{sec: MIP_window_approach} gives an overview on the design of our 
matheuristic and introduces the the 
MIP-based window approach, while section~\ref{sec: algorithm} describes the 
remaining part of our matheuristic, including implementation details.
Section~\ref{sec: compResults} presents the computational results obtained with 
the window approach matheuristic, comparing them to the current state-of-the-art.
Finally, section~\ref{sec:conclusion} concludes our work and outlines potential 
directions for future research and applications of the presented approach.

\section{Previous and related work}
\label{sec: pref_rel_work}

We first present previous work on the SRFLP, and then give a short overview on 
popular existing matheuristic techniques and discuss their relation to our 
approach.

\subsection{Previous work on the SRFLP}

The SRFLP is one of the most studied problems in facility layout optimization, 
with applications in warehouse design, hospitals, libraries, and manufacturing 
systems \citep{simmons1969one, picard1981one, heragu1988machine}. Due to its 
NP-hardness, researchers have developed both exact and heuristic approaches to 
solve it.

\paragraph{Metaheuristic approaches}

The SRFLP has been extensively studied in the literature, leading to the development of various metaheuristic approaches. These methods can be broadly categorized into construction heuristics, local search methods, and advanced metaheuristics.
Construction heuristics are often the first step in solving the SRFLP, as they provide initial solutions for more complex metaheuristic methods. These heuristics typically focus on the weights and lengths of facilities to generate feasible layouts.
Notable examples include the work of \citet{kumar1995heuristic} and \citet{djellab2001new}, who proposed efficient construction heuristics that prioritize either weights or lengths.

A widely applied metaheuristic for the SRFLP is simulated annealing (SA), which balances exploration and exploitation through a cooling schedule.
Foundational SA-based methods were introduced by \citet{heragu1992experimental} and \citet{romero1990methods}, establishing the approach as a competitive early technique.
Several genetic algorithms have also been proposed for the SRFLP. Notable examples include the work of \citet{ficko2004designing},
who incorporated problem-specific crossover operators, and \citet{datta2011single}, who refined selection mechanisms to improve convergence.
Tabu search (TS) is another class of metaheuristics applied to the SRFLP. \citet{gomes2000metaheuristic} and \citet{kothari2013tabu} developed TS variants tailored to
facility layout constraints, demonstrating improved local search performance.

Bio-inspired algorithms such as ant colony optimization (ACO) and particle swarm optimization (PSO) have also been explored.
ACO was first applied to the SRFLP by \citet{solimanpur2005ant}, while PSO was introduced by \citet{samarghandi2010particle}.
A hybrid method combining both ACO and PSO was presented by \citet{teo2008hybrid}, aiming to leverage the complementary strengths of both algorithms.

Another effective approach is scatter search, which maintains and combines diverse high-quality solutions. \citet{kothari2014scatter} and \citet{satheesh2008scatter} demonstrated its applicability to facility layout problems.

One of the most prominent frameworks in the literature is variable neighborhood search (VNS), which systematically changes neighborhood structures to escape local optima.
The most impactful implementation was developed by \citet{palubeckis2015fast}, while \citet{guan2016hybridizing} extended this approach by integrating ACO into the VNS framework to enhance search intensification.

The most recent and arguably most effective metaheuristic techniques for the SRFLP include the so-called $GRASP_{PR}$, introduced by \citet{rubio2016grasp},
which combines a greedy randomized adaptive search procedure (GRASP) with path relinking for intensification.
An extension of this method,  $GRASP_{F}$, proposed by \citet{cravo2019grasp}, embeds five local search strategies within a VNS-based framework to further refine solutions.
Another competitive approach is the multi-start simulated annealing (MSA) algorithm, introduced by \citet{palubeckis2017single},
which applies SA independently to multiple randomly generated initial permutations,
thus improving solution diversity and robustness. \citet{palubeckis2017single} also proposed efficient implementations of swap and insertion neighborhood operations,
which we adopt in our matheuristic to generate high-quality starting solutions.
To the best of our knowledge, the current state-of-the-art method for large-scale SRFLP instances (up to 1000 facilities) is the k-medoids memetic permutation group ($KMPG$)
algorithm proposed by \citet{tang2022solving}. This algorithm introduces a symmetry-breaking mechanism to reduce the effective search space,
followed by a memetic algorithm that employs k-medoids clustering—an unsupervised learning method—to focus the search on promising solution regions.
An SA-based intensification phase is then applied to locally refine the best solutions found.

\paragraph{Exact methods}

Early exact methods include branch-and-bound \citep{simmons1969one} and dynamic programming \citep{picard1981one}. More recently, cutting-plane and polyhedral techniques have been developed
\citep{anjos2008computing, amaral2013polyhedral}. The two most efficient and most 
commonly used exact approaches to solve the SRFLP are MIP and \emph{semidefinite 
programming (SDP)} methods.
The first mixed-integer formulation, which was still non-linear at the time, was developed by \citet{love1976solving}.
Currently, the most effective approach MIP approach for solving the SRFLP to optimality is the betweenness-based formulation, introduced by \citet{amaral2009new}.
This formulation allows to solve instances with up to 35 facilities to optimality 
and due to 
the availability of efficient MIP solvers such as Gurobi or CPLEX, serves as the 
basis for our window approach which solves parts of the layout to optimality. We 
present this formulation in detail in section \ref{subsec:BBA}.
We note that SDP approaches (see, e.g.~\citep{anjos2005semidefinite, 
anjos2006computational, fischer2006computational}) allow to solve 
instances with up to 81 facilities to 
optimality, as demonstrated by 
\citet{schwiddessen2021semidefinite}. However, these approaches need highly-tuned 
problem-specific solution algorithms based on lagrangian relaxation and have very 
high runtimes. This limits 
their applicability and thus we use the betweenness-based MIP formulation in our 
matheuristic. Finally, note that all existing formulations (MIP and SDP) use at 
least $O(n^2)$ many variables, which limits the scalability of exact approaches, 
keeping in mind that instances tackled by heuristics have up to $n=1000$ 
facilities.

\subsection{Matheuristics}
The general idea of matheuristics is not new to computational optimization and 
there already exist several matheuristic approaches which try to mimic 
classical local search (for a general overview on matheuristics, see, e.g., the 
book \citep{maniezzo2021matheuristics}).

In our proposed window approach, we start with a feasible solution and then 
iteratively apply a (moving) window over this solution, and solve the resulting 
subinstance within the window (i.e., a subset of facilities) to optimality, while 
keeping the facilities outside the window fixed. This gives an improved solution, 
as long as the facilities within the window were not already placed optimal with 
respect to the remaining facilities. 

Thus our matheuristic can be described as "using an exact method to search for 
improved solutions near a given feasible solution". There are two classes of 
matheuristics, which are operating within this paradigm, namely \emph{local 
branching} \citep{fischetti2003local} and the \emph{corridor method} 
\citep{sniedovich2006corridor}. Local 
branching  works by adding local branching constraints to the linear 
programming (LP)-relaxation of the MIP of the considered problem. These local 
branching constraints aim to keep the LP-relaxation near to an already known good 
feasible 
solution, with the hope that the MIP-solver quickly finds an improved solution 
this way. The process is then iterated with the improved solutions as starting 
point. The constraints are typically Hamming-distance type constraints based on 
the current solution. This in particular means that no variables get fixed, and we 
are stuck with solving the original problem (with its original number of 
variables) with additional constraints. In our setting, an approach just adding a 
local branching-type constraint will not be computationally feasible, as for 
larger instances, even solving the LP-relaxation becomes prohibitive due to its 
size.

The idea of the corridor method is quite similar to local branching, quoting from 
the textbook 
of
\citet{maniezzo2021matheuristics}, the idea is "using the exact method over 
successive 
restricted portions of the 
solution space of the given problem. The restriction is obtained by applying 
exogenous constraints, which define local neighborhoods around points of 
interest". When the corridor method was first introduced, the exact method used 
was dynamic programming, but it was observed that it can also work with other 
exact algorithms, such as MIP. Thus, due to the very general definition of what 
constitutes the 
corridor method, our proposed approach could be classified as a type of 
corridor method. However, we note that the exogenous constraint in the 
demonstration of the corridor method in the contexts of MIPs in 
\citet{maniezzo2021matheuristics} is essentially a local branching 
constraint.

Finally, our method can also be viewed as a type of fix-and-relax method.
A well known example of such as type of matheuristic is the \emph{kernel search} 
approach, which was first 
introduced by \citet{angelelli2010kernel} and has been successfully applied to 
various combinatorial optimization problems. It starts with an initial kernel
— a promising subset of decision variables, often selected via LP relaxation — and partitions the remaining variables into buckets.
In each iteration, one bucket is combined with the kernel to form a restricted problem, which is solved under limited computational resources.
If the resulting solution improves the current best, the active variables from that bucket are added to the kernel, which grows monotonically.

\section{The window approach matheuristic}
\label{sec: MIP_window_approach}

Our window approach matheuristic consists of two phases:
\begin{enumerate}
    \item An \textbf{enhanced MSA phase} that generates a high-quality initial solution:
    The core components of our MSA implementation are based on the approach proposed 
    by \citet{palubeckis2017single}.
    In contrast to the work of \citet{palubeckis2017single}, our MSA integrates the \texttt{LS\_wind} algorithm (see algorithm ~\ref{alg:LSWind}), which includes two local search 
    procedures and the moving window approach. The window approach within the MSA 
    is used with a smaller window-size of 13 to ensure a quick solution time.    
    \item A \textbf{refinement phase} that intensifies the best solution found by the first phase again using the \texttt{LS\_wind} algorithm~\ref{alg:LSWind}:
    In this phase, we apply the \texttt{LS\_wind} algorithm with larger window sizes of 17 and 19 to further improve the solution quality.
\end{enumerate}

In the remainder of this section, we focus on the technical details of the window 
approach, i.e., we present the MIP formulation we base our approach on and then 
describe how to modify this MIP to be suitable to use within our window approach. 
In section \ref{sec: algorithm}, we then give implementation details of our 
matheuristic. For the first phase, we focus on the enhancements compared to the 
MSA by 
\citet{palubeckis2017single} (implementation details of the MSA itself are given 
in~\ref{subsec:MSA} with the key enhancements outlined in 
algorithm~\ref{alg:MSA} there).

\subsection{Betweenness-based MIP formulation} \label{subsec:BBA}
The basic idea of the betweenness-based MIP formulation is that the distance between the centers of two facilities can be determined by summing the distances of all facilities located between them,
plus half the length of each of the two facilities. Consequently, to compute the 
distance, it is sufficient to know which facilities are positioned between any 
given pair. Let binary variables $x_{i,k,j}$ be one if and only if facility $k$ is between 
facilities $i$ and $j$ in the solution.
The MIP formulation for the betweenness-based formulation is as follows 
\citep{amaral2009new}.
\begin{align}
    &\textbf{Objective:} \notag \\
    &\min \sum_{i=1}^{n-1} \sum_{j=i+1}^{n} w_{ij} \frac{\ell_i + \ell_j}{2}
    + \sum_{i=1}^{n-1} \sum_{j=i+1}^{n} w_{ij} \sum_{\substack{k \neq i \\ k \neq 
    j}} \ell_k x_{i,k,j} \label{eq:srflp_obj} \\
    &\textbf{Subject to:} \notag \\
    &x_{i,k,j} + x_{i,j,k} + x_{j,i,k} = 1 
    &&\text{for } 1 \leq i < j < k \leq n \label{eq:srflp_c1} \\
    &x_{i,h,j} + x_{i,h,k} + x_{j,h,k} \leq 2 
    &&\text{for } 1 \leq i < j < k < h \leq n \label{eq:srflp_c2} \\
    &x_{i,h,j} + x_{i,h,k} - x_{j,h,k} \geq 0 
    &&\text{for } 1 \leq i < j < k \leq n,\, h \neq i,j,k \label{eq:srflp_c3} \\
    &x_{i,h,j} - x_{i,h,k} + x_{j,h,k} \geq 0 
    &&\text{for } 1 \leq i < j < k \leq n,\, h \neq i,j,k \label{eq:srflp_c4} \\
    &-x_{i,h,j} + x_{i,h,k} + x_{j,h,k} \geq 0 
    &&\text{for } 1 \leq i < j < k \leq n,\, h \neq i,j,k \label{eq:srflp_c5} \\
    &x_{i,k,j} \in \{0,1\}
    &&\text{for } 1 \leq i < j \leq n,\, k \neq i,j \label{eq:srflp_c6}
\end{align}

where the objective (\ref{eq:srflp_obj}) minimzes the weighted sum of distances between all pairs of facilities. Since the first part of the objective function is independent of the permutation,
we can compute it in advance and only need to optimize the second part. 
Constraint~\eqref{eq:srflp_c1} ensures that for any three facilities, exactly one of them is positioned between the other two.
Constraint~\eqref{eq:srflp_c2} ensures that a facility can be positioned between at most two of the other three facilities.
Constraints~\eqref{eq:srflp_c3}-\eqref{eq:srflp_c5} ensure that a facility cannot be positioned between just one pair of the three other facilities.
Figure~\eqref{Fig:constraint4facilities} visualizes the four possible positions of 
the fourth facility $h$, denoted as $h1$ to $h4$. 
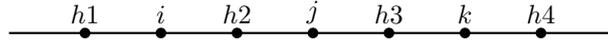
\begin{figure}[!ht]
	\centering
	\begin{tikzpicture}
		\coordinate (i) at (2, 0);
		\coordinate (j) at (4, 0);
		\coordinate (k) at (6, 0);
		\coordinate (h1) at (1, 0);
		\coordinate (h2) at (3, 0);
		\coordinate (h3) at (5, 0);
		\coordinate (h4) at (7, 0);
		
		\fill (i) circle [radius=2pt] node [above] {$i$};
		\fill (j) circle [radius=2pt] node [above] {$j$};
		\fill (k) circle [radius=2pt] node [above] {$k$};
		\fill (h1) circle [radius=2pt] node [above] {$h1$};
		\fill (h2) circle [radius=2pt] node [above] {$h2$};
		\fill (h3) circle [radius=2pt] node [above] {$h3$};
		\fill (h4) circle [radius=2pt] node [above] {$h4$};
		
		\draw[thick] (0, 0) -- (8 , 0);
	\end{tikzpicture}
	\caption{Possible position of fourth facility h}
	\label{Fig:constraint4facilities}
\end{figure}

This shows that the facility can not be positioned
between just one pair of the three other facilities.
The last constraint~\ref{eq:srflp_c6} ensures that the betweenness decision variables are binary.
In the proposed matheuristic, this formulation is extended via the window approach 
to optimally solve subsegments of the SRFLP, while preserving the global structure 
of the solution. Note that directly fixing some variables in the model would not 
translate into a fixing of facilities to positions, as the variables indicate 
betweenness.


\subsection{Window approach}\label{subsec:window}
To apply the betweenness-based MIP formulation to subproblems of the SRFLP for larger 
instances, we developed the so called window approach.
This method leverages the idea of dividing the layout into separate parts. Instead 
of optimizing the entire layout at once, we fix the positions of all facilities 
except for those within a designated window, which is given by the interval 
$[sw,ew]$ (i.e., all facilities with a position within this interval in the given 
solution are in the window).
The SRFLP is then solved optimally for the facilities inside the window, 
considering the fixed positions of those outside it. We first describe how the 
objective function can be partitioned given such a window, and then describe how 
this window approached can be used with the betweenness-based MIP formulation.

The key principle of this method is that distances between facilities can be classified into three categories, depending on how they are influenced by the permutation of facilities
inside the window:
\begin{enumerate}
    \item \textbf{Fixed distances:} Distances between facilities outside the window remain unchanged, as they are independent of the permutation within the window.
    To be more precise: The distances between all pairs of facilities outside the window are fixed, as the total distance between them is unaffected by the permutation inside the window.
    This is illustrated in figure \ref{fig:graph_srflp_outer_facs} 
    \item \textbf{Partially dependent distances:} Distances between a facility inside the window and a facility outside the window are partially fixed.
    The most interesting aspect of this approach is that we can partially fix the distances between facilities inside and outside the window.
    In figure \ref{fig:graph_srflp_inner_outer}, the fixed distances, which can be precomputed are shown in black, while the distances that still need to be computed are highlighted in green.
    \item \textbf{Variable distances:} Distances between facilities inside the window must be computed and optimized, as they totally depend on the permutation inside the window.
    The distances between all pairs of facilities inside the window must still be computed, as they are the primary variables we seek to optimize.
    These distances are visualized with the green arrows in figure 
    \ref{fig:graph_srflp_inner_facs}.
\end{enumerate}

Thus, by precomputing the fixed distances and the fixed part of the partially 
dependent distances, we can focus solely on optimizing the arrangement of 
facilities within the window using
the betweenness-based MIP formulation.

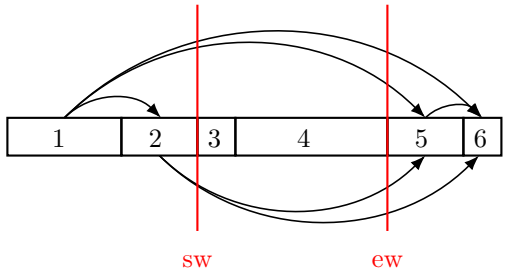
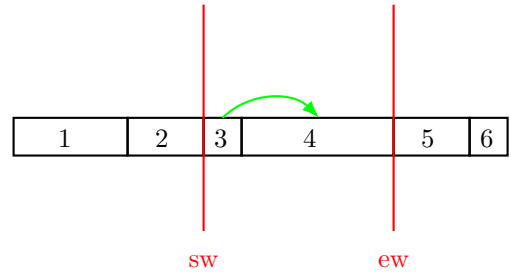
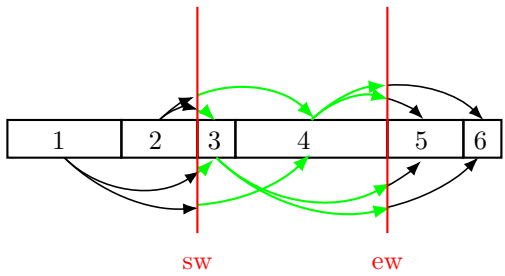
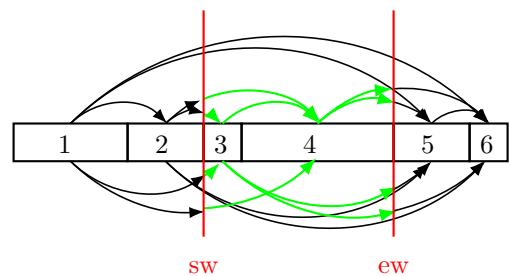
\begin{figure}[h!]
    \begin{minipage}[t]{0.475\textwidth}
    \centering

    \begin{subfigure}[t]{\linewidth}
        \centering
        \begin{tikzpicture}[node distance= 0.2cm, >={Stealth[round]}, thick]

            \draw[draw=black] (0.5,0) rectangle ++(1.5, 0.5) node[pos=.45] {1};
            \draw[draw=black] (2,0) rectangle ++(1, 0.5) node[pos=.45] {2};
            \draw[draw=black] (3,0) rectangle ++(0.5, 0.5) node[pos=.45] {3};
            \draw[draw=black] (3.5,0) rectangle ++(2, 0.5) node[pos=.45] {4};
            \draw[draw=black] (5.5,0) rectangle ++(1, 0.5) node[pos=.45] {5};
            \draw[draw=black] (6.5,0) rectangle ++(0.5, 0.5) node[pos=.45] {6};

            \draw[-Latex, semithick] (1.25, 0.5) arc[radius= 0.9, start angle= 135, end angle=45];
            \draw[-Latex, semithick] (1.25, 0.5) arc[radius= 3.36, start angle= 135, end angle=45];
            \draw[-Latex, semithick] (1.25, 0.5) arc[radius= 3.88, start angle= 135, end angle=45];
            \draw[-Latex, semithick] (2.5, 0) arc[radius= 2.475, start angle= 225, end angle=315];
            \draw[-Latex, semithick] (2.5, 0) arc[radius= 2.975, start angle= 225, end angle=315];
            \draw[-Latex, semithick] (6, 0.5) arc[radius= 0.53, start angle= 135, end angle=45];

            \draw[red, thick] (3, -1) -- (3, 2) node[below] at (3, -1.2) {sw};
            \draw[red, thick] (5.5, -1) -- (5.5, 2) node[below] at (5.5, -1.2) {ew};

        \end{tikzpicture}
        \caption{Distances independent of permutation inside the window}
        \label{fig:graph_srflp_outer_facs}
    \end{subfigure}

    \vspace{0.5cm}

    \begin{subfigure}[t]{\linewidth}
        \centering
        \begin{tikzpicture}[node distance= 0.2cm, >={Stealth[round]}, thick]

            \draw[draw=black] (0.5,0) rectangle ++(1.5, 0.5) node[pos=.45] {1};
            \draw[draw=black] (2,0) rectangle ++(1, 0.5) node[pos=.45] {2};
            \draw[draw=black] (3,0) rectangle ++(0.5, 0.5) node[pos=.45] {3};
            \draw[draw=black] (3.5,0) rectangle ++(2, 0.5) node[pos=.45] {4};
            \draw[draw=black] (5.5,0) rectangle ++(1, 0.5) node[pos=.45] {5};
            \draw[draw=black] (6.5,0) rectangle ++(0.5, 0.5) node[pos=.45] {6};
            \draw[-Latex, semithick, black] (1.25, 0) arc[radius= 1.4142, start angle= 225, end angle=303.75];
            \draw[-Latex, thick, green] (3, -0.2) arc[radius= 1.4142, start angle=303.75, end angle=315];
            \draw[-Latex, semithick, black] (1.25, 0) arc[radius= 2.2981, start angle= 225, end angle=273.46];
            \draw[-Latex, thick, green] (3, -0.625) arc[radius= 2.2981, start angle=273.46, end angle=315];
            \draw[-Latex, semithick, black] (2.5, 0.5) arc[radius= 0.53033, start angle= 135, end angle=75];
            \draw[-Latex, thick, green] (3, 0.625) arc[radius= 0.53033, start angle=75, end angle=45];
            \draw[-Latex, semithick, black] (2.5, 0.5) arc[radius= 1.4142, start angle= 135, end angle=112.5];
            \draw[-Latex, thick, green] (3, 0.825) arc[radius= 1.4142, start angle=112.5, end angle=45];
            \draw[-Latex, thick, green] (4.5, 0.5) arc[radius= 1.06065, start angle= 135, end angle=75];
            \draw[-Latex, semithick, black] (5.5, 0.785) arc[radius= 1.06065, start angle=75, end angle=45];
            \draw[-Latex, thick, green] (4.5, 0.5) arc[radius= 1.591, start angle= 135, end angle=95];
            \draw[-Latex, semithick, black] (5.5, 0.96) arc[radius= 1.591, start angle=95, end angle=45];
            \draw[-Latex, thick, green] (3.25, 0) arc[radius= 1.9445, start angle= 225, end angle=298.6364];
            \draw[-Latex, semithick, black] (5.5, -0.375) arc[radius= 1.9445, start angle=298.6364, end angle=315];
            \draw[-Latex, thick, green] (3.25, 0) arc[radius= 2.475, start angle= 225, end angle=282.8571];
            \draw[-Latex, semithick, black] (5.5, -0.6575) arc[radius= 2.475, start angle=282.8571, end angle=315];

            \draw[red, thick] (3, -1) -- (3, 2) node[below] at (3, -1.2) {sw};
            \draw[red, thick] (5.5, -1) -- (5.5, 2) node[below] at (5.5, -1.2) {ew};

        \end{tikzpicture}
        \caption{Distances partially dependent on permutation inside the window}
        \label{fig:graph_srflp_inner_outer}
    \end{subfigure}
    \end{minipage}
    \hfill
    \begin{minipage}[t]{0.475\textwidth}

    \begin{subfigure}[t]{\linewidth}
        \centering
        \centering
        \begin{tikzpicture}[node distance= 0.2cm, >={Stealth[round]}, thick]

            \draw[draw=black] (0.5,0) rectangle ++(1.5, 0.5) node[pos=.45] {1};
            \draw[draw=black] (2,0) rectangle ++(1, 0.5) node[pos=.45] {2};
            \draw[draw=black] (3,0) rectangle ++(0.5, 0.5) node[pos=.45] {3};
            \draw[draw=black] (3.5,0) rectangle ++(2, 0.5) node[pos=.45] {4};
            \draw[draw=black] (5.5,0) rectangle ++(1, 0.5) node[pos=.45] {5};
            \draw[draw=black] (6.5,0) rectangle ++(0.5, 0.5) node[pos=.45] {6};

            \draw[-Latex, thick, green] (3.25, 0.5) arc[radius= 0.9, start angle= 135, end angle=45];

            \draw[red, thick] (3, -1) -- (3, 2) node[below] at (3, -1.2) {sw};
            \draw[red, thick] (5.5, -1) -- (5.5, 2) node[below] at (5.5, -1.2) {ew};

        \end{tikzpicture}
        \caption{Distances dependent on permutation inside the window}
        \label{fig:graph_srflp_inner_facs}
    \end{subfigure}
    \vspace{0.5cm}

    \begin{subfigure}[t]{\linewidth}
        \centering
        \begin{tikzpicture}[node distance= 0.2cm, >={Stealth[round]}, thick]

            \draw[draw=black] (0.5,0) rectangle ++(1.5, 0.5) node[pos=.45] {1};
            \draw[draw=black] (2,0) rectangle ++(1, 0.5) node[pos=.45] {2};
            \draw[draw=black] (3,0) rectangle ++(0.5, 0.5) node[pos=.45] {3};
            \draw[draw=black] (3.5,0) rectangle ++(2, 0.5) node[pos=.45] {4};
            \draw[draw=black] (5.5,0) rectangle ++(1, 0.5) node[pos=.45] {5};
            \draw[draw=black] (6.5,0) rectangle ++(0.5, 0.5) node[pos=.45] {6};

            \draw[-Latex, semithick] (1.25, 0.5) arc[radius= 0.9, start angle= 135, end angle=45];
            \draw[-Latex, semithick] (1.25, 0.5) arc[radius= 3.36, start angle= 135, end angle=45];
            \draw[-Latex, semithick] (1.25, 0.5) arc[radius= 3.88, start angle= 135, end angle=45];
            \draw[-Latex, semithick] (2.5, 0) arc[radius= 2.475, start angle= 225, end angle=315];
            \draw[-Latex, semithick] (2.5, 0) arc[radius= 2.975, start angle= 225, end angle=315];
            \draw[-Latex, semithick] (6, 0.5) arc[radius= 0.53, start angle= 135, end angle=45];

            \draw[-Latex, thick, green] (3.25, 0.5) arc[radius= 0.9, start angle= 135, end angle=45];

            \draw[-Latex, semithick, black] (1.25, 0) arc[radius= 1.4142, start angle= 225, end angle=303.75];
            \draw[-Latex, thick, green] (3, -0.2) arc[radius= 1.4142, start angle=303.75, end angle=315];
            \draw[-Latex, semithick, black] (1.25, 0) arc[radius= 2.2981, start angle= 225, end angle=273.46];
            \draw[-Latex, thick, green] (3, -0.625) arc[radius= 2.2981, start angle=273.46, end angle=315];
            \draw[-Latex, semithick, black] (2.5, 0.5) arc[radius= 0.53033, start angle= 135, end angle=75];
            \draw[-Latex, thick, green] (3, 0.625) arc[radius= 0.53033, start angle=75, end angle=45];
            \draw[-Latex, semithick, black] (2.5, 0.5) arc[radius= 1.4142, start angle= 135, end angle=112.5];
            \draw[-Latex, thick, green] (3, 0.825) arc[radius= 1.4142, start angle=112.5, end angle=45];
            \draw[-Latex, thick, green] (4.5, 0.5) arc[radius= 1.06065, start angle= 135, end angle=75];
            \draw[-Latex, semithick, black] (5.5, 0.785) arc[radius= 1.06065, start angle=75, end angle=45];
            \draw[-Latex, thick, green] (4.5, 0.5) arc[radius= 1.591, start angle= 135, end angle=95];
            \draw[-Latex, semithick, black] (5.5, 0.96) arc[radius= 1.591, start angle=95, end angle=45];
            \draw[-Latex, thick, green] (3.25, 0) arc[radius= 1.9445, start angle= 225, end angle=298.6364];
            \draw[-Latex, semithick, black] (5.5, -0.375) arc[radius= 1.9445, start angle=298.6364, end angle=315];
            \draw[-Latex, thick, green] (3.25, 0) arc[radius= 2.475, start angle= 225, end angle=282.8571];
            \draw[-Latex, semithick, black] (5.5, -0.6575) arc[radius= 2.475, start angle=282.8571, end angle=315];

            \draw[red, thick] (3, -1) -- (3, 2) node[below] at (3, -1.2) {sw};
            \draw[red, thick] (5.5, -1) -- (5.5, 2) node[below] at (5.5, -1.2) {ew};

        \end{tikzpicture}
        \caption{Putting it all together}
        \label{fig:graph_srflp_comb}
    \end{subfigure}
    \end{minipage}

    \caption{Types of distances in the computation of the objective function given 
    a window defined by $[sw,ew]$.}
    \label{fig:graph_srflp_all}
\end{figure}
In the following proposition we formalize that the objective function of the SRFLP 
can be decomposed in a way that facilitates the application of the window approach.
Specifically, we show that the objective function can be separated into components 
dependent on the permutation within a given window and components that are 
independent
and can thus be precomputed.
\begin{proposition}
    Given the set of facilities inside a window [$sw$,$ew$] in the current solution $\pi$, where $ew > sw$, the objective function of the SRFLP, assuming that the positions of all facilities
    before and after the window are fixed to their position in the current solution $\pi$, can be rewritten as:

\begin{equation}
    \begin{split}
        \text{fixed(sw, ew, }\pi) + \sum_{i=1}^{sw-1} \sum_{j=sw}^{ew} w_{\pi_j, \pi_i} \left( \frac{\ell_{\pi_j}}{2} + \sum_{k=sw}^{j-1} \ell_{\pi_k} \right) + \\
        \sum_{i=sw}^{ew-1} \sum_{j=i+1}^{ew} w_{\pi_i, \pi_j} \left( \frac{\ell_{\pi_i} + \ell_{\pi_j}}{2} + \sum_{k=i+1}^{j-1} \ell_{\pi_k} \right) + \\
        \sum_{i=sw}^{ew} \sum_{j=ew+1}^{n} w_{\pi_i, \pi_j} \left( \frac{\ell_{\pi_i}}{2} + \sum_{k=i+1}^{ew} \ell_{\pi_k} \right) = \\
        \sum_{i=1}^{n-1} \sum_{j=i+1}^{n} w_{\pi_i, \pi_j} \left( \frac{\ell_{\pi_i} + \ell_{\pi_j}}{2} + \sum_{k=i+1}^{j-1} \ell_{\pi_k} \right),
    \end{split}
    \label{eq:obj_to_proof}
\end{equation}

where $fixed(sw, ew, \pi)$ is equal to:
\begin{equation}
    \begin{split}
        \sum_{i=1}^{sw-2} \sum_{j=i+1}^{sw-1} w_{\pi_i, \pi_j} \left( \frac{\ell_{\pi_i} + \ell_{\pi_j}}{2} + \sum_{k=i+1}^{j-1} \ell_{\pi_k} \right) + \\
        \sum_{i=1}^{sw-1} \sum_{j=sw}^{ew} w_{\pi_i, \pi_j} \left( \frac{\ell_{\pi_i}}{2} + \sum_{k=i+1}^{sw-1} \ell_{\pi_k} \right) + \\
        \sum_{i=1}^{sw-1} \sum_{j=ew+1}^{n} w_{\pi_i, \pi_j} \left( \frac{\ell_{\pi_i} + \ell_{\pi_j}}{2} + \sum_{k=i+1}^{sw-1} \ell_{\pi_k} + \ell_w + \sum_{k=ew+1}^{j-1} \ell_{\pi_k} \right) + \\
        \sum_{i=sw}^{ew} \sum_{j=ew+1}^{n} w_{\pi_j, \pi_i} \left( \frac{\ell_{\pi_j}}{2} + \sum_{k=ew+1}^{j-1} \ell_{\pi_k} \right) + \\
        \sum_{i=ew+1}^{n-1} \sum_{j=i+1}^{n} w_{\pi_i, \pi_j} \left( \frac{\ell_{\pi_i} + \ell_{\pi_j}}{2} + \sum_{k=i+1}^{j-1} \ell_{\pi_k} \right),
    \end{split}
    \label{eq:fixed}
\end{equation}

as these distances do not depend on the permutation within the window, they can be computed in advance.
Consequently, when optimizing the permutation between two positions with respect to the fixed layout outside the window,
the objective function can be optimized by considering only the last three summands.
\end{proposition}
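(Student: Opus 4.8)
The plan is to expand the right-hand side of \eqref{eq:obj_to_proof} — the full SRFLP objective with $d^{\pi_{ij}}=\tfrac{\ell_{\pi_i}+\ell_{\pi_j}}{2}+\sum_{k=i+1}^{j-1}\ell_{\pi_k}$ as in \eqref{eq: dist_theory} — and to reorganize the double sum over pairs by partitioning the index set $\{(i,j):1\le i<j\le n\}$ according to which of the three position blocks $B_1=\{1,\dots,sw-1\}$, $B_2=\{sw,\dots,ew\}$ (the window), and $B_3=\{ew+1,\dots,n\}$ contains $i$ and which contains $j$. Because $i<j$ and the blocks are ordered, only six classes can occur: $(B_1,B_1)$, $(B_2,B_2)$, $(B_3,B_3)$, $(B_1,B_2)$, $(B_1,B_3)$, $(B_2,B_3)$. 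The whole argument then reduces to handling each class, and for the ``mixed'' ones splitting the distance term into a part that is invariant under reorderings of the window and a part that is not.

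For the three pure classes the between-sum $\sum_{k=i+1}^{j-1}\ell_{\pi_k}$ ranges only over positions of a single block. Classes $(B_1,B_1)$ and $(B_3,B_3)$ are therefore permutation-independent and contribute, respectively, the first and the fifth summand of $\mathrm{fixed}(sw,ew,\pi)$ in \eqref{eq:fixed}; class $(B_2,B_2)$ contributes the middle window term of \eqref{eq:obj_to_proof} verbatim. For the mixed classes I would split $d^{\pi_{ij}}$ at the window boundaries. For a pair in $(B_1,B_2)$, writing $\sum_{k=i+1}^{j-1}=\sum_{k=i+1}^{sw-1}+\sum_{k=sw}^{j-1}$ separates $d^{\pi_{ij}}$ into the invariant piece $\tfrac{\ell_{\pi_i}}{2}+\sum_{k=i+1}^{sw-1}\ell_{\pi_k}$ and the window-dependent piece $\tfrac{\ell_{\pi_j}}{2}+\sum_{k=sw}^{j-1}\ell_{\pi_k}$; summing over the class and using $w_{\pi_i,\pi_j}=w_{\pi_j,\pi_i}$ yields exactly the second summand of $\mathrm{fixed}$ together with the first window term of \eqref{eq:obj_to_proof}. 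Symmetrically, splitting a pair in $(B_2,B_3)$ at $ew$ gives the invariant piece $\tfrac{\ell_{\pi_j}}{2}+\sum_{k=ew+1}^{j-1}\ell_{\pi_k}$ (the fourth summand of $\mathrm{fixed}$) and the window-dependent piece $\tfrac{\ell_{\pi_i}}{2}+\sum_{k=i+1}^{ew}\ell_{\pi_k}$ (the third window term).

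The only step that needs an observation rather than bookkeeping is class $(B_1,B_3)$: here $\sum_{k=i+1}^{j-1}\ell_{\pi_k}=\sum_{k=i+1}^{sw-1}\ell_{\pi_k}+\bigl(\sum_{k=sw}^{ew}\ell_{\pi_k}\bigr)+\sum_{k=ew+1}^{j-1}\ell_{\pi_k}$, and the middle bracket equals the total window length $\ell_w$, which does not change under any reordering of the facilities in $[sw,ew]$ because it is a sum over a fixed set of facilities. Hence the whole $(B_1,B_3)$ contribution is permutation-independent and equals the third summand of $\mathrm{fixed}(sw,ew,\pi)$. Collecting the six classes reproduces precisely the five summands of $\mathrm{fixed}$ together with the three window terms of \eqref{eq:obj_to_proof}, which establishes the identity; and since those three terms are the only ones depending on the ordering inside $[sw,ew]$, optimizing over window permutations is equivalent to minimizing their sum.

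I expect the main obstacle to be purely organizational: making every summation range line up (in particular, matching the orientation of the $w$-indices via symmetry when a window-dependent piece is peeled off), and checking the degenerate cases $sw=1$ (block $B_1$ empty), $ew=n$ (block $B_3$ empty), and $j=sw$ or $i=ew$ in the mixed classes (where $\sum_{k=sw}^{j-1}$ or $\sum_{k=i+1}^{ew}$ is an empty sum), in which several summands vanish but the stated identity still holds. No idea beyond the invariance of $\ell_w$ is required; the remainder is a finite, if somewhat tedious, verification.
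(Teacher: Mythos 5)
Your proposal is correct and follows essentially the same route as the paper's proof in the appendix: the paper likewise partitions the pairs into the six block classes, splits the mixed before--window and window--after distances at the window boundaries (using the symmetry of $w$ to reorient indices), and uses the invariance of the total window length $\ell_w$ to show the before--after class is fixed. The only cosmetic difference is that the paper writes out the six summands explicitly before splitting, whereas you organize them by block labels; the content is identical.
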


\begin{proof}
See~\ref{app:proof}.
\end{proof}


Thus, for any facility located before the window, we can fix its distance to the 
start of the window, based on its center position, as illustrated in 
figure~\ref{fig:graph_srflp_inner_outer}.
Analogously, for a facility positioned after the window, we fix the distance from the end of the window to the center of that facility.

To apply the betweenness-based MIP formulation to a given solution, we proceed in two steps:

\begin{enumerate}
\item \textbf{Introduction of two dummy facilities:} We introduce two dummy facilities that represent the aggregated facilities located before and after the window.
For the MIP model, one dummy facility is introduced for the set of facilities preceding the window and one for those following it. Both dummy facilities are assigned a length of zero and fixed
at predefined positions, ensuring that only the relative ordering of the facilities within the window is subject to optimization.
To enforce this structure, we include the following constraint:

\begin{equation}
\sum_{k=1}^{ws} x_{ws+1,k,ws+2} = ws,
\label{eq:dummyconstraint}
\end{equation}

where, $ws + 1$ denotes the dummy facility representing all facilities preceding the window, while $ws + 2$ represents those following it.
This constraint ensures that all facilities within the window are placed between the two dummy nodes, thereby preserving the fixed external structure of the layout.
\item \textbf{Modification of weights matrix:} To account for the weigths between internal and external facilities, we aggregate the weights
between each facility inside the window and the facilities located before and after it. Specifically, for every facility within the window,
we compute the cumulative weights from all external facilities and add them to the corresponding entries for the dummy facility in the weight matrix $w_{ij}$.
This adjustment ensures that the contribution of the weigths of the facilities outside the window is accurately represented during the localized optimization. 
This way we can use the general objective function for the MIP after this modification, see equation \eqref{eq:final_obj}.
\end{enumerate}

We can then solve the problem regarding the permutation of the facilities inside 
the window to optimality with respect to the fixed positions outside the window.
This method significantly reduces computation time and by keeping the window size 
small (i.e., between 13 and 19 as discussed in the beginning of the section), we 
can ensure that the problem remains computationally manageable.

\section{Algorithmic design}
\label{sec: algorithm}
In this section, we present the detailed design of our matheuristic algorithm, 
which integrates the components introduced in section~\ref{sec: 
MIP_window_approach}.
We first 
outline the implementation of the metaheuristic components, and 
then describe the integration and execution of the window approach.

\subsection{Implementation of the metaheuristic components}
\label{subsec:metaheuristic}
Our approach starts with an MSA phase, closely following \citet{palubeckis2017single}, which restarts a SA procedure multiple times.
Palubeckis introduced a novel approach to efficiently compute the move gains for the SRFLP using specific vectors and matrices.
For each temperature level $T$, $n \cdot \hat{z}$, where $\hat{z}$ is the number of inner runs, random moves are computed: a swap move with probability $p$
and an insertion move with probability $1-p$. Depending on the temperature level, two distinct computational methods are utilized for generating and applying move gains.
Specifically, one method is more efficient when the probability of accepting a move is relatively high; hence, it is employed at higher temperatures.
Conversely, the alternative method excels in faster computation of move gains and is thus preferred at lower temperatures where the acceptance of a move is already relatively low. 
The selected move is applied depending on the move gain and the current temperature.
Afterwards, the temperature is reduced by a cooling factor of $\alpha$. The parameters of the MSA are tested in subsection \ref{subsec: paramChoice}.

For ease of readability, we focus on the differences 
to \citet{palubeckis2017single} in the remaining part of this section, more 
details about the MSA can be found in~\ref{subsec:MSA}.
To intensify solutions obtained from the MSA, we apply a procedure combining local 
search techniques with the window approach in an iterative fashion. This hybrid 
phase is shown in algorithm~\ref{alg:LSWind}.
Initially, we apply the \texttt{LS\_insert} local search, as insertion moves have 
been shown to be more effective than swap moves in similar contexts 
\citep{cravo2019grasp}. 
Before invoking this phase, we precompute the required matrices 
and vectors, as shown in the MSA setup by \citet{palubeckis2017single} to ensure time efficient computation of all move gains.
However in contrast to \citet{palubeckis2017single}, we do not only use these matrices for the MSA phase but also adapted his approach to use it for the local search phase.

Following \texttt{LS\_insert}, we execute the \texttt{LS\_swap} procedure iteratively until no further improvements are achieved. Afterwards, the window approach method \texttt{wind\_met} is applied.
If \texttt{wind\_met} yields an improved solution, the matrices and vectors are recomputed to reflect the updated configuration, and the local search procedures are repeated based on the new solution.
Window sizes are set adaptively: within the MSA, we use the window size vector 
$wsv = (13)$; outside the 
MSA, i.e., in the refinement phase, we use $wsv = (17, 19)$ to allow for 
potentially more 
improvement (of course, using larger windows results in more time consuming MIPs). 
The "go-back" parameter $gb$ controls, if we directly leave \texttt{wind\_met} 
after the first improvement (which we do during the MSA phase, where we are 
interested in quick runtime), or if we continue with the remaining windows to 
potentially further improve the solution (which we do in the refinement phase). 

\begin{algorithm}[!ht]
\caption{\texttt{LS\_wind}}
\label{alg:LSWind}
\KwIn{$\pi$, $F(\pi)$, $wsv$, $gb$}
\KwOut{$\pi$, $F(\pi)$}
\While{$impr$}{
    \While{$loc\_impr$}{
        $\pi, F(\pi) \gets$ \texttt{LS\_insert}$(\pi, F(\pi))$\;
        $\pi, F(\pi), loc\_impr \gets$ \texttt{LS\_swap}$(\pi, F(\pi))$\;
    }
    $\pi, F(\pi), \texttt{impr} \gets$ \texttt{wind\_met}$(\pi, F(\pi), wsv, gb)$\;
}
\Return{$\pi$, $F(\pi)$}\;
\end{algorithm}

\paragraph{Implementation of the local search methods}
In contrast to the MSA approach, where random move gains are computed, we now 
compute the move gains for all possible insertions in the current solution $\pi$, 
using the move computation method which excels in faster computation of move gains as proposed by \citet{palubeckis2017single}.
This method is optimized for efficient move gain computation, although it requires more time to update the solution, as move gains for all insertion pairs must be evaluated,
even though at most one move is performed.
As shown in algorithm~\ref{alg:LSInsert}, we always perform the move with the best gain and recompute the move gains for the updated solutions, iterating until no further improvement is possible.
Detailed implementation details are provided in \citet{pammer2025matheuristic}.

\begin{algorithm}[h!t]
    \caption{\texttt{LS\_insert}}
    \label{alg:LSInsert}
    \KwIn{$\pi$, $F(\pi)$}
    \KwOut{$\pi$, $F(\pi)$}

    $gm \gets$ compute move gains for all combination of facilities and positions\;
    \While{$\min(gm) < 0$}{
        $k,l \gets \argmin gm$\;
        $r \gets \pi_k$\;
        $F(\pi) \gets F(\pi) + \min(gm)$\;
    	insert facility $r$ on position $l$ in the current solution $\pi$\ \;
        $gm \gets$ recompute move gains for all combination of facilities and positions\;
    }
    \Return{$\pi$, $F(\pi)$}\;
\end{algorithm}

Similarly, the \texttt{LS\_swap} procedure, outlined in algorithm~\ref{alg:LSSwap}, evaluates all pairwise facility swaps.
In contrast to the \texttt{LS\_insert}, the \texttt{LS\_swap} algorithm immediately returns the updated solution $\pi$, its objective value $F(\pi)$ and a boolean variable $impr$,
as the local search operators are used in iterative fashion.

\begin{algorithm}[!ht]
\caption{\texttt{LS\_swap}}
\label{alg:LSSwap}
\KwIn{$\pi$, $F(\pi)$}
\KwOut{$\pi$, $F(\pi)$, $impr$}

$impr \gets$ false\;
$gm \gets$ compute move gains for all pairs of facilities\;

\If{$\min(gm) < 0$}{
    $impr \gets$ true\;
    $r, s \gets \argmin(gm)$\;
    $F(\pi) \gets F(\pi) + \min(gm)$\;
    swap positions of facility $r$ and $s$ in the current solution $\pi$\;
}
\Return{$\pi$, $F(\pi)$, $impr$}\;
\end{algorithm}

\subsection{Implemention of the window approach}

For every window, which first consists of the first $ws$ facilities, in the current solution $\pi$ the \texttt{wind\_met} method, shown in algorithm~\ref{alg:windMet}, 
optimizes the permutation of the facilities inside the window with respect to the fixed positions of the facilities outside the window.
It starts by computing the summed up weights of the facilities before and after the window to each of the facilities inside the window and adds them to the weight matrix of the facilities
inside the window as described in subsection~\ref{subsec:window}.
The method then computes the fixed weighted distances outside the window, visualized with black arrows in figure~\ref{fig:graph_srflp_comb}, using the algorithm \texttt{calc\_out\_w\_dist} (see algorithm ~\ref{alg:outwdist} in \ref{app:appendixB} for details).
Afterwards, the adapted betweenness-based MIP formulation, described in subsection~\ref{subsec:window}, is invoked to solve the permutation of the facilities inside the window to optimality.
If the MIP model finds an improved solution, the current solution $\pi$ is updated using the function \texttt{create\_order} shown in algorithm~\ref{alg:order}.
Depending on $gb$, which is true outside the \texttt{MSA} and false inside it, the method either returns the updated solution and its objective value immediately or continues to the next window which is 
shifted by $ws$ facilties. After the method has iterated through all windows of the current window size, it checks whether any improvements were found.
If so, it returns the updated solution and its objective value. Otherwise, it increments the window size and restarts the process with the new window size.

\begin{algorithm}[!ht]
	\caption{\texttt{wind\_met}}
	\label{alg:windMet}
	\KwIn{$\pi$, $F(\pi)$, $wsv$, $gb$}
	\KwOut{$\pi$, $F(\pi)$, $loc\_impr$}
	
	$\pi^*$ $\gets$ $\pi$\;
	$h \gets 1$\;
	$loc\_impr \gets$ false\;
	\While{$h \leq len(wsv)$}{
		$ws \gets wsv[h]$\;
		
		\For{$i \gets 1$ \KwTo $n / ws$}{
            $\pi^{'} \gets \pi$\;
			$wf \gets$ facilities inside the window in $\pi^{'}$\;
			$wb, wa \gets$ sum of weights to facilities before/after the window for all facilities in window\;
			$ww \gets$ weights of facilities inside the window, including $wb$ and $wa$\;
			$fwd \gets$ \texttt{calc\_out\_w\_dist}($\pi^{'}, sw, ew$)\;
			$x \gets$ \texttt{MIP\_BB}($wf, ww, ws, fwd, F(\pi^{'})$)\;
			$pos \gets$ \texttt{create\_order}($x$, $ws$)\;
			update $\pi^{'}$ inside the window according to $pos$\;
			\If{$F(\pi^{'})<F(\pi)$}{
				$\pi \gets \pi^{'}$\;				
				\If{$gb$}{
					$loc\_impr \gets$ true\;
					\Return{$\pi$, $F(\pi)$, $loc\_impr$}\;
				}
			}
		}
		
		\If{$F(\pi)$ $<$ $F(\pi^*)$}{
			$loc\_impr \gets$ true\;
			\Return{$\pi$, $F(\pi)$, $loc\_impr$}\;
		}
		\Else{
			$h \gets h + 1$\;
		}
	}
	\Return{$\pi$, $F(\pi)$, $loc\_impr$}\;
\end{algorithm}
\clearpage

To further enhance the performance of the moving window approach we incorporate 
two acceleration strategies:

\begin{enumerate}
    \item \textbf{Fixed minimum cumulative weigths: } The first acceleration strategy fixes parts of the 
    summed up weights of the facilities before and after the window to each of the 
    facilities inside it.
        To be more precise, the underlying idea is that the weight from each facility inside the window to the facilities before and after the window must be at least the minimum of those two weights.
        Therefore, we can add the minimum of the summed up weights to the facilities before and after the window—multiplied by the combined length of all facilities inside the window—directly to the objective function.
        These minimum-weighted distances are independent of the permutation within the window, which justifies the use of the following equation:
        \begin{equation}
            \text{min\_wd} = \sum_{i \in wf} \ell_{i} \cdot  \sum_{i=1}^{ws} min\_weights[i],
            \label{eq:minFixedCosts}
        \end{equation}
        where $min\_weights[i]$ is equal to the minimum summed up weights of the $i$'th facility inside the window to the facilities before and after the window.
        This also means that we have to subtract the minimum of the summed up weights before and after for each of the facilities inside the window in the weights matrix.
        This makes the preprocessing part even faster because there are now $2 \cdot ws$ more zeros in the weights matrix.
    \item \textbf{Improvement constraint: } The second speed up idea ensures that the MIP model only returns a solution if an actual improvement is found.
        This is implemented to avoid unnecessary computational effort and to ensure time efficiency during the optimization. This requirement is enforced using the following constraint:
        \begin{equation}
            fixed(sw,ew,\pi) + \sum_{i=1}^{ws-1}\sum_{j=i+1}^{ws} w_{ij} \sum_{k 
            \neq i, k \neq j} \ell_k x_{i,k,j} < F(\pi),
            \label{eq:improvement}
        \end{equation}
        where $fixed(sw, ew, \pi)$ stands for the total fixed weighted distances 
        for the current window in the current solution $\pi$.
        Here, the left-hand side represents the objective value of the solution, including the fixed contribution from the rest of the permutation. 
        The constraint ensures that this value must be strictly less than the 
        current objective value $F(\pi)$, thereby guaranteeing that only 
        improvements are accepted.
    \end{enumerate}

\section{Computational results}
\label{sec: compResults}

In this section, we compare the performance of our matheuristic approach with the current state-of-the-art metaheuristics for solving the SRFLP.

\subsection{Experimental setup}
\label{sec: expSet}

Our matheuristic approach was implemented in Julia, utilizing Gurobi 12.0.0 as the 
MIP solver to solve the MIP formulation described in section~\ref{sec: 
MIP_window_approach}. 
All experiments were conducted on a single core of an Intel Xeon X5570 processor 
running at 2.93 GHz and 6GB memory. The runtime for each instance was set to 
$n^{1.7}$ seconds, where $n$ denotes the number of facilities in the respective 
problem instance. 
This time limit scales with the problem size to account for the increased 
computational effort required for larger instances (note that other works in 
literature also adapt the runtime of their metaheuristics to the instance size).
To evaluate the performance of our method and facilitate a direct comparison with 
the current state-of-the-art metaheuristics for the SRFLP, we selected three 
widely used benchmark instance sets with large-scale instances.
These benchmark instances have also been employed in recent literature, such as \citet{rubio2016grasp, cravo2019grasp, tang2022solving}.

\begin{itemize}
    \item \texttt{Anjos-large:} This instance set comprises 20 instances. Each instance is defined by a specific number of facilities, a symmetric weight matrix, and a corresponding length vector. The number of facilities ranges from 200 to 500,
    with 5 instances for each of the values 200, 300, 400, and 500. The entries in the weight matrix lie between 0 and 16, while the length vector values range from 0 up to 499.
    \item \texttt{Sko-large:} This set also includes 20 instances with the same 
    facility sizes as the \texttt{Anjos-large} set. The symmetric weight matrix in 
    these instances contains values between 0 and 10,
    and the entries of the length vector range from 0 to 20. 
    \item \texttt{Palubeckis-large:} This set includes 30 instances with 310 to 
    1000 facilities. The symmetric weight matrix in these instances contains 
    values between 0 and 10,
    and the entries of the length vector range from 1 to 10. 
\end{itemize}

To assess the performance of our algorithm, we evaluate not only the best solution obtained but also the average objective value across 15 independent runs. This allows us to provide insights into the robustness and consistency of our method.

\subsection{Choice of parameters} \label{subsec: paramChoice}

Before executing our matheuristic, it is necessary to determine suitable values 
for the parameters used within the MSA phase.
Specifically, we tune three key parameters: the cooling factor ($\alpha$), the 
number of inner runs ($\hat{z}$), and the probability of generating a swap move 
($p$).
To ensure a fair and systematic comparison while avoiding overfitting the algorithm to the final evaluation set, we performed parameter tuning exclusively on a separate training set.
Specifically, we used 20 instances of sizes 150 and 450 from the 
\texttt{Anjos-large} dataset, also used as a trainings data set by 
\citet{rubio2016grasp}. 
We adopted a one-at-a-time tuning strategy: for each parameter under 
consideration, the other two were fixed to values close to those proposed by 
\citet{palubeckis2017single}.
This approach allows us to isolate the impact of each parameter on solution quality, while minimizing interaction effects between parameters during tuning.

\paragraph{Cooling factor}  
We began by tuning the cooling factor $\alpha$, which controls the temperature cool down rate in the SA process. Palubeckis set this parameter to 0.95 in his original work.
We tested three values: 0.95, 0.98, and 0.99, while fixing $\hat{z}$ to 100 and $p$ to 0.35. These fixed values were chosen based on Palubeckis' MSA algorithm.

\begin{table}[ht]
\centering
\begin{tabular}{lr}
  \hline
Cooling factor & mean obj. val \\ 
  \hline
0.95 & 1541565894.94 \\ 
  0.98 & 1531806007.82 \\ 
  0.99 & 1593809235.81 \\ 
   \hline
\end{tabular}
\caption{Cooling factor comparison for different parameter settings.}
\label{tab:cooling-fac}
\end{table}

As shown in table \ref{tab:cooling-fac}, a cooling factor of 0.98 led to slightly better average results.
This suggests that extending the duration of a single SA run tends to be more effective than relying on frequent restarts. Based on this observation, we set $\alpha = 0.98$ for all subsequent experiments.

\paragraph{Number of inner runs}  
Next, we tuned the number of inner runs $\hat{z}$, which determines how many moves are generated per temperature level. This value is multiplied by the number of facilities $n$,
making it sensitive to the instance size. Palubeckis fixed this parameter to 100, and we evaluated the alternatives 70, 100, and 130, while using the $\alpha = 0.98$ and $p = 0.35$.

\begin{table}[ht]
\centering
\begin{tabular}{lr}
  \hline
Numb. inner runs times n & mean obj. val \\ 
  \hline
70 & 1532770744.03 \\ 
  100 & 1531811594.45 \\ 
  130 & 1531832981.13 \\ 
   \hline
\end{tabular}
\caption{Number inner runs comparison for different parameter settings.}
\label{tab:n-inner-runs}

\end{table}

As shown in table~\ref{tab:n-inner-runs}, using $100 \cdot n$ inner moves per 
temperature yielded the best average performance. Consequently, we fixed $\hat{z} 
= 100$ for the remainder of the experiments.

\paragraph{Probability of swap moves}  
Finally, we evaluated the probability $p$ of generating a swap move versus an insertion move. In Palubeckis' study, values of 0, 0.5, and 1 were tested,
with the best results achieved at 0.5 and the second-best at 0. We focused our search around these promising values and tested 0.30, 0.35, and 0.40,
while keeping $\alpha = 0.98$ and $\hat{z} = 100$. Note that the complementary probability determines the likelihood of applying an insertion move instead.

\begin{table}[ht]
\centering
\begin{tabular}{lr}
  \hline
  Prob. to swap & mean obj. val \\ 
  \hline
  0.3 & 1531802502.24 \\ 
  0.35 & 1531800292.13 \\ 
  0.4 & 1531813084.39 \\ 
   \hline
\end{tabular}
\caption{Probability for swap move comparison for different parameter settings.}
\label{tab:prob-swap}
\end{table}

As shown in table~\ref{tab:prob-swap}, a swap probability of 0.35 resulted in the best average performance. Thus, we fixed $p = 0.35$ for all further experiments.

\subsection{Improvement found due window approach}
\label{sec: impr_wind}
Before comparing the performance of our innovative window-based matheuristic with existing state-of-the-art metaheuristics, we first analyze the specific impact of the window approach on the objective values
of the test instances. 
To this end, we evaluate our algorithm both with and without the window component, using the same fixed parameter configuration for each version.
Each variant is executed 15 times on all three benchmark instance sets, ensuring a fair and consistent comparison.
This experimental setup allows us to isolate and quantify the contribution of the window approach in achieving high-quality solutions and in reaching and outperforming current state-of-the-art methods.
The results for the \texttt{Anjos-large} instance set are summarized in table~\ref{tab:anjos_large_diff}. The first column lists the instance name,
where the first part of the integer indicates the instance size. The second column reports the best objective value found using the full algorithm with the window component,
and the third column reports the best value obtained when omitting the window approach.

\begin{table}[ht]
\centering
\begin{tabular}{lrr}
  \hline
instance & window\_appr & wo\_window\_appr \\ 
  \hline
  200\_01 & 305461818.00 & 305461818.00 \\ 
  200\_02 & 178806828.50 & 178806828.50 \\ 
  200\_03 & 61891275.00 & 61891275.00 \\ 
  200\_04 & 127735691.00 & 127735691.00 \\ 
  200\_05 & 89057121.50 & 89057121.50 \\ 
  300\_01 & 1549422266.00 & 1549422266.00 \\ 
  300\_02 & 955538302.50 & 955538302.50 \\ 
  300\_03 & 308257630.50 & 308257630.50 \\ 
  300\_04 & 602873168.50 & 602873168.50 \\ 
  300\_05 & 466150775.00 & 466150775.00 \\ 
  400\_01 & 4999801252.50 & 4999801252.50 \\ 
  400\_02 & 2910265439.00 & 2910265439.00 \\ 
  400\_03 & \textbf{920860291.00} & 920860340.00 \\ 
  400\_04 & 1805638949.00 & 1805638949.00 \\ 
  400\_05 & 1401831869.50 & 1401831869.50 \\ 
  500\_01 & \textbf{12290667266.00} & 12290670220.00 \\ 
  500\_02 & \textbf{7491697208.50} & 7491701877.50 \\ 
  500\_03 & \textbf{2478331774.00} & 2478331869.00 \\ 
  500\_04 & \textbf{4281106062.50} & 4281106067.50 \\ 
  500\_05 & \textbf{3675292982.50} & 3675293102.50 \\ 
   \hline
\end{tabular}
\caption{Comparison of objective function values with and without window approach - \texttt{Anjos-large} instances}
\label{tab:anjos_large_diff}
\end{table}

As seen in table~\ref{tab:anjos_large_diff}, the algorithm using the window approach consistently matches or outperforms the version without the window approach.
Although all runs are entirely independent and both methods operate under identical time limits—which allows the variant without the MIP-based window approach
to explore a larger number of independent starting solutions—the window-enhanced version still achieves superior or equal results across all instances.
A closer examination of the results obtained with both configurations of our matheuristic reveals that, for instance sizes with up to 300 facilities,
the algorithm already finds the best solutions using only the MSA and local search 
components (we note that in the implementation of the MSA 
in~\citep{palubeckis2017single}, there is no local search phase at the end). As 
shown later in subsection~\ref{subsec: res_anjos},
these solutions already represent the current state-of-the-art for these instances.
This demonstrates the robustness of the metaheuristic components employed within the window-based matheuristic.
When focusing on the \texttt{Anjos-large} instances with at least 400 facilities, we observe that the algorithm incorporating the MIP-based window approach outperforms the alternative in six out of ten cases.
Of particular interest is that the window-based version achieves better solutions for all instances with 500 facilities, clearly indicating the effectiveness of the window approach
for solving larger instances.

\begin{table}[ht]
\centering
\begin{tabular}{lrr}
  \hline
instance & window\_appr & wo\_window\_appr \\ 
  \hline
  200\_01 & 3230706.00 & 3230706.00 \\ 
  200\_02 & 7758927.00 & 7758927.00 \\ 
  200\_03 & 12739043.00 & 12739043.00 \\ 
  200\_04 & \textbf{20260531.00} & 20260540.00 \\ 
  200\_05 & 26871976.50 & 26871976.50 \\ 
  300\_01 & \textbf{11249787.00} & 11249806.00 \\ 
  300\_02 & \textbf{28991767.00} & 28991771.00 \\ 
  300\_03 & 48790881.50 & 48790881.50 \\ 
  300\_04 & 71185096.50 & 71185096.50 \\ 
  300\_05 & 86789519.50 & 86789519.50 \\ 
  400\_01 & \textbf{26696000.00} & 26696018.00 \\ 
  400\_02 & \textbf{67950476.00} & 67950478.00 \\ 
  400\_03 & \textbf{115881368.00} & 115881369.00 \\ 
  400\_04 & 162932778.00 & 162932778.00 \\ 
  400\_05 & 227632499.50 & 227632499.50 \\ 
  500\_01 & \textbf{52853051.00} & 52853111.00 \\ 
  500\_02 & \textbf{127362131.00} & 127362134.00 \\ 
  500\_03 & \textbf{230877706.50} & 230877709.50 \\ 
  500\_04 & \textbf{340273141.00} & 340273149.00 \\ 
  500\_05 & \textbf{445698560.00} & 445698566.00 \\ 
   \hline
\end{tabular}
\caption{Comparison of objective function values with and without window approach - \texttt{Sko-large} instances}
\label{tab:sko_large_diff}
\end{table}

We now proceed with the comparison of the two algorithm variants on the 
\texttt{Sko-large} instance set in table~\ref{tab:sko_large_diff}. As seen in the 
table the algorithm incorporating the window approach again consistently matches 
or outperforms the version without the window component.

A more detailed examination of the results reveals that, for the \texttt{Sko-large} instances, the window-based algorithm already produces better solutions in three cases with up to 300 facilities.
This suggests that the \texttt{Sko-large} instances are more challenging to solve using purely metaheuristic methods, whereas the inclusion of the window approach still enables us to achieve solutions
that match the current state-of-the-art.
This observation provides an early indication of the strong robustness of the window-based matheuristic, which will be examined in more depth in the following section.
For instances with at least 400 facilities, the algorithm using the window approach yields better results in eight out of ten cases.
Notably, it again outperforms the non-window version across all instances with 500 facilities, further underscoring the effectiveness of the window strategy in tackling large-scale problems.

We now also compare the differences between the algorithm using the window approach and the one without it on the \texttt{Palubeckis-large} instance set.
\begin{table}[ht]
\centering
\begin{tabular}{lrr}
  \hline
instance & window\_appr & wo\_window\_appr \\ 
  \hline
310 & 105754955.00 & 105754955.00 \\ 
320 & 119522881.50 & 119522881.50 \\ 
330 & 124891823.50 & 124891823.50 \\ 
340 & 129796777.50 & 129796777.50 \\ 
350 & 149594388.00 & 149594388.00 \\ 
360 & 141122187.00 & 141122187.00 \\ 
370 & 174663159.50 & 174663159.50 \\ 
380 & 189452403.50 & 189452403.50 \\ 
390 & 208688557.00 & 208688557.00 \\ 
400 & 213768810.50 & 213768810.50 \\ 
410 & 243494269.00 & 243494269.00 \\ 
420 & 270756527.50 & 270756527.50 \\ 
430 & \textbf{286334521.50} & 286339945.50 \\ 
440 & 301067264.50 & 301067264.50 \\ 
450 & 324488485.00 & 324488485.00 \\ 
460 & 314884659.00 & 314884659.00 \\ 
470 & \textbf{379529990.00} & 379533805.00 \\ 
480 & 366821074.00 & 366821074.00 \\ 
490 & 413901954.50 & 413901954.50 \\ 
500 & 465570835.50 & 465570835.50 \\ 
550 & 587090450.50 & 587090450.50 \\ 
600 & \textbf{801567640.50} & 801567648.50 \\ 
650 & \textbf{927512834.00} & 927512844.00 \\ 
700 & \textbf{1158462242.00} & 1158462275.00 \\ 
750 & \textbf{1438408836.50} & 1438408838.50 \\ 
800 & \textbf{1861593361.00} & 1861593385.00 \\ 
850 & \textbf{2126675899.00} & 2126675902.00 \\ 
900 & \textbf{2600003520.00} & 2600003539.00 \\ 
950 & \textbf{2993153552.50} & 2993153653.50 \\ 
1000 & \textbf{3426646843.00} & 3426649204.00 \\ 
   \hline
\end{tabular}
\caption{Comparison of objective function values with and without window approach - \texttt{Palubeckis-large} instances}
\label{tab:palu_large_diff}
\end{table}

As shown in table~\ref{tab:palu_large_diff}, the window-based algorithm consistently matches or improves upon the results of the non-window version.
For the \texttt{Palubeckis-large} instances, the method using the window approach finds lower upper bounds in 11 out of 30 cases.
Furthermore, as will be discussed later, our window-based method finds better upper bounds precisely in those cases where our matheuristic outperforms the current state-of-the-art approaches.
These results underline the effectiveness of the window approach in systematically exploring previously unexamined neighborhoods, particularly beneficial for larger and more complex instances.
Overall, the comparison emphasizes the strength of combining targeted mathematical optimization via the adapted betweenness-based MIP formulation with local search strategies. This combination provides a powerful approach for enhancing already high-quality solutions.

In summary, the window approach proves to be a crucial component for effective solution intensification.
Its innovative design enables the exploration of previously unexamined neighborhoods, making it particularly powerful for tackling large-scale instances.

\subsection{Comparing window approach matheuristic to current state-of-the-art 
algorithms}

To the best of our knowledge, the $KMPG$ algorithm proposed by~\citet{tang2022solving} represents the most advanced approach
for solving the SRFLP to date. This algorithm has produced the best-known solutions for all instances in all three benchmark sets introduced in section \ref{sec: expSet}.
Notably, $KMPG$ incorporates a k-medoids-based crossover operator — an advanced technique grounded in unsupervised learning — to enhance solution recombination within its memetic framework.
In addition to $KMPG$, we include the most effective classical metaheuristic, $GRASP_F$ \citep{cravo2019grasp}, in our experimental comparison.
These algorithms have previously achieved competitive results on the same 
benchmark instances and serve as well-established reference points for evaluating 
the performance of our method.

\paragraph{Computational results for \texttt{Anjos-large} instance set} \label{subsec: res_anjos}
We begin the comparison with the \texttt{Anjos-large} instance set. As mentioned previously, the $KMPG$ algorithm has achieved the best known solutions for all instances in this benchmark set,
while $GRASP_F$ consistently produced the second-best results on average. Thus, their results serve as reference points for evaluating the performance of our window-based matheuristic.
The results are presented in table~\ref{tab:anjos_comb_results}. The instance size is indicated in the first part of the first column.
The second and third columns show the best known objective values reported by $KMPG$ and $GRASP_F$, respectively.
The last three columns report the best, average, and standard deviation of the objective values achieved by our matheuristic across 15 independent runs. 
Since the algorithms differ in implementation language, hardware, and execution environments, we do not include computational runtime in this comparison.
Instead, we set a uniform runtime limit of $n^{1.7}$ seconds for our matheuristic,
where $n$ denotes the number of facilities in the respective instance. Our focus is on solution quality, measured by the objective value.

\begin{table}[ht]
\centering
\begin{tabular}{lrr|rrr}
  \hline
  & $KMPG$ & $GRASP_F$ & \multicolumn{3}{c}{window\_appr} \\
instance & & & BEST & AVG & SD \\ 
  \hline
200\_01 & \textbf{305461818.0} & \textbf{305461818.0} & \textbf{305461818.0} & 305462878.56 & 1722.21 \\
200\_02 & \textbf{178806828.5} & \textbf{178806828.5} & \textbf{178806828.5} & 178807306.21 & 1754.03 \\
200\_03 & \textbf{61891275.0} & \textbf{61891275.0} & \textbf{61891275.0} & 61891275.00 & 0.00 \\
200\_04 & \textbf{127735691.0} & 127735701.0 & \textbf{127735691.0} & 127735783.89 & 232.62 \\
200\_05 & \textbf{89057121.5} & 89057130.5 & \textbf{89057121.5} & 89057121.50 & 0.00 \\
300\_01 & \textbf{1549422266.0} & 1549423272.0 & \textbf{1549422266.0} & 1549431740.07 & 12434.93 \\
300\_02 & \textbf{955538302.5} & \textbf{955538302.5} & \textbf{955538302.5} & 955538494.58 & 359.17 \\
300\_03 & \textbf{308257630.5} & 308257645.5 & \textbf{308257630.5} & 308257630.50 & 0.00 \\
300\_04 & \textbf{602873168.5} & \textbf{602873168.5} & \textbf{602873168.5} & 602873168.50 & 0.00 \\
300\_05 & \textbf{466150775.0} & 466151295.0 & \textbf{466150775.0} & 466154239.90 & 9439.93 \\
400\_01 & \textbf{4999801252.5} & 4999816802.0 & \textbf{4999801252.5} & 4999812094.57 & 15881.34 \\
400\_02 & \textbf{2910265439.0} & 2910265496.0 & \textbf{2910265439.0} & 2910265448.11 & 38.55 \\
400\_03 & \textbf{920860291.0} & 920861309.0 & \textbf{920860291.0} & 920861837.61 & 7401.97 \\
400\_04 & \textbf{1805638949.0} & \textbf{1805638949.0} & \textbf{1805638949.0} & 1805638949.00 & 0.00 \\
400\_05 & \textbf{1401831869.5} & 1401835315.0 & \textbf{1401831869.5} & 1401831869.50 & 0.00 \\
500\_01 & \textbf{12290667266.0} & 12290696250.0 & \textbf{12290667266.0} & 12290731246.57 & 186431.01 \\
500\_02 & 7491697298.5 & 7491700551.0 & \textbf{7491697208.5} & 7491824038.67 & 272529.95 \\
500\_03 & \textbf{2478331774.0} & 2478348156.0 & \textbf{2478331774.0} & 2478434979.30 & 92500.75 \\
500\_04 & \textbf{4281106062.5} & 4281107260.0 & \textbf{4281106062.5} & 4281140384.54 & 105528.32 \\
500\_05 & 3675293000.5 & 3675369229.0 & \textbf{3675292982.5} & 3675300412.71 & 16773.97 \\
   \hline
\end{tabular}
\caption{Comparison of objective function values for \texttt{Anjos-large} instances with current state of the art and robustness check}
  \label{tab:anjos_comb_results}
\end{table}

In table~\ref{tab:anjos_comb_results}, the best solution value for each instance is highlighted in bold.  Our window-based matheuristic matches or improves upon the best known results
for all \texttt{Anjos-large} instances, and establishes new best known upper bounds in two cases, demonstrating its competitiveness.

A closer look reveals that for instance sizes up to 400 facilities, our approach consistently matches the objective values reported by \citet{tang2022solving}.
This strongly suggests that these solutions may already be optimal or very close to optimal. When considering the largest instances with 500 facilities,
our method discovers improved upper bounds in two out of five cases, further underlining its effectiveness in tackling large-scale SRFLP instances.

As discussed in subsection \ref{sec: impr_wind}, these improvements can be attributed to the novel neighborhood structure embedded within our window-based approach.
It is also important to note that the window mechanism not only improves these two specific instances, but contributes significantly to the overall performance.
Without it, the algorithm would not be able to match the quality of solutions found by the AI-guided $KMPG$ metaheuristic.

As shown in the last column of table~\ref{tab:anjos_comb_results}, our window-based matheuristic achieves a very low standard deviation across the 15 independent runs for each instance,
highlighting the robustness of our approach. To assess consistency, we now look at the relative percentual standard deviation with respect to the average objective value.
The largest relative standard deviation across all instances is below $0.0037 \%$, while the average relative standard deviation is below $0.0009 \%$. 
These extremely small variations indicate highly consistent performance across independent runs.
Remarkably, for six out of the 20 instances, the standard deviation is exactly zero, meaning that the algorithm found the same best objective value in all 15 independent runs.

\paragraph{Computational results for \texttt{Sko-large} instance set} \label{subsec: res_sko}
Next, we evaluate the performance of our window-based matheuristic on the \texttt{Sko-large} benchmark instances. Again, we compare our results against those
of the $KMPG$ algorithm \citep{tang2022solving}, which has achieved the best known objective values for all \texttt{Sko-large} instances.
Additionally, we include $GRASP_F$ \citep{cravo2019grasp}, the previous state-of-the-art metaheuristic for this benchmark.
As in the previous comparison, we use the best values obtained by these algorithms as reference points. Table~\ref{tab:sko_comb_results} presents the results:
As before, the first column indicates instance size. Columns two and three list the best known values from $KMPG$ and $GRASP_F$.
The final three columns report the best, average, and standard deviation of our results across 15 runs.

\begin{table}[ht]
\centering
\begin{tabular}{lrr|rrr}
  \hline
  & $KMPG$ & $GRASP_F$ & \multicolumn{3}{c}{window\_appr} \\
  instance & & & BEST & AVG & SD \\ 
  \hline
  200\_01 & \textbf{3230706.0} & 3231044.0 & \textbf{3230706.0} & 3230712.93 & 14.30 \\
  200\_02 & \textbf{7758927.0} & 7758934.0 & \textbf{7758927.0} & 7758927.00 & 0.00 \\
  200\_03 & \textbf{12739043.0} & \textbf{12739043.0} & \textbf{12739043.0} & 12739175.07 & 143.47 \\
  200\_04 & \textbf{20260531.0} & \textbf{20260531.0} & \textbf{20260531.0} & 20260551.24 & 103.65 \\
  200\_05 & \textbf{26871976.5} & 26871990.5 & \textbf{26871976.5} & 26872065.11 & 298.67 \\
  300\_01 & \textbf{11249787.0} & 11252008.0 & \textbf{11249787.0} & 11249811.17 & 21.87 \\
  300\_02 & \textbf{28991767.0} & 28991854.0 & \textbf{28991767.0} & 28991823.93 & 211.40 \\
  300\_03 & \textbf{48790881.5} & 48791025.5 & \textbf{48790881.5} & 48790914.36 & 120.64 \\
  300\_04 & \textbf{71185096.5} & 71185259.5 & \textbf{71185096.5} & 71185188.09 & 375.75 \\
  300\_05 & \textbf{86789519.5} & 86789543.5 & \textbf{86789519.5} & 86789898.43 & 233.99 \\
  400\_01 & 26696039.0 & 26708999.0 & \textbf{26696000.0} & 26696072.88 & 36.62 \\
  400\_02 & 67950486.0 & 67950673.0 & \textbf{67950476.0} & 67951937.96 & 3706.02 \\
  400\_03 & \textbf{115881368.0} & 115881934.0 & \textbf{115881368.0} & 115881645.59 & 401.40 \\
  400\_04 & \textbf{162932778.0} & 162933020.0 & \textbf{162932778.0} & 162932780.52 & 6.67 \\
  400\_05 & \textbf{227632499.5} & 227633018.5 & \textbf{227632499.5} & 227632499.50 & 0.00 \\
  500\_01 & 52853174.0 & 52873391.0 & \textbf{52853051.0} & 52853354.88 & 217.84 \\
  500\_02 & 127362138.0 & 127364274.0 & \textbf{127362131.0} & 127362139.08 & 14.49 \\
  500\_03 & 230877719.5 & 230919458.5 & \textbf{230877706.5} & 230877794.30 & 437.75 \\
  500\_04 & \textbf{340273141.0} & 340274982.0 & \textbf{340273141.0} & 340279654.78 & 8637.99 \\
  500\_05 & 445698570.0 & 445699477.0 & \textbf{445698560.0} & 445698652.67 & 213.98 \\
   \hline
\end{tabular}
\caption{Comparison of objective function values for \texttt{Sko-large} instances with current state of the art and robustness check}
\label{tab:sko_comb_results}
\end{table}

Figure~\ref{tab:sko_comb_results} highlights the best solution values for each instance in bold. As seen with the \texttt{Anjos-large} dataset, our window approach consistently
reaches at least the current best known solutions for all 20 instances in the \texttt{Sko-large} set and even discovers a new upper bounds for six instances.
For problems up to 300 facilities, our results match those of $KMPG$, suggesting these solutions may be optimal. For 400- and 500-facility instances, we improve upon six out of ten instances and
notably, four of the five largest instances yield new best known results.
These improvements are once again attributable to the new neighborhood observed by the novel window approach as can be seen in section \ref{sec: impr_wind}.

The last column of table~\ref{tab:sko_comb_results} shows the standard deviation of the objective values across the 15 independent runs for each instance.
The average relative standard deviation across all instances is below $0.0006\%$, with the largest relative standard deviation being below $0.006\%$.
These small variations indicate that our approach is highly consistent across runs. Notably, for two out of the 20 instances, the standard deviation is exactly zero, indicating that the best objective
value was consistently found in every single run.

\paragraph{Computational results for \texttt{Palubeckis-large} instance set} \label{subsec: res_palu}
Finally, we assess our matheuristic on the \texttt{Palubeckis-large}\sloppy\ benchmark set, which includes 30 instances ranging from 310 to 1000 facilities.
As before, we compare against the $KMPG$ algorithm~\citep{tang2022solving} and the $GRASP_F$ metaheuristic~\citep{cravo2019grasp}. Table~\ref{tab:palu_results} summarizes the results.
The instance name and size are indicated in the first column. The second and third columns list the best known results from $KMPG$ and $GRASP_F$, respectively.
The next two columns show the best and average results from our method, and the final column provides the standard deviation across 15 runs.

\begin{table}[ht]
\centering
\begin{tabular}{lrr|rrr}
  \hline
  & $KMPG$ & $GRASP_F$ & \multicolumn{3}{c}{window\_appr} \\
instance & & & BEST & AVG & SD \\ 
  \hline
310 & \textbf{105754955.0} & 105754993.0 & \textbf{105754955.0} & 105755011.14 & 134.45 \\ 
320 & \textbf{119522881.5} & 119522963.5 & \textbf{119522881.5} & 119523172.39 & 475.79 \\ 
330 & \textbf{124891823.5} & \textbf{124891823.5} & \textbf{124891823.5} & 124891823.50 & 0.00 \\ 
340 & \textbf{129796777.5} & 129796900.5 & \textbf{129796777.5} & 129796794.06 & 50.72 \\ 
350 & \textbf{149594388.0} & \textbf{149594388.0} & \textbf{149594388.0} & 149594393.22 & 15.05 \\ 
360 & \textbf{141122187.0} & 141122226.0 & \textbf{141122187.0} & 141122733.24 & 1472.93 \\ 
370 & \textbf{174663159.5} & 174663982.5 & \textbf{174663159.5} & 174663159.50 & 0.00 \\ 
380 & \textbf{189452403.5} & \textbf{189452403.5} & \textbf{189452403.5} & 189463165.85 & 8860.82 \\ 
390 & \textbf{208688557.0} & 208689095.0 & \textbf{208688557.0} & 208688557.00 & 0.00 \\ 
400 & \textbf{213768810.5} & 213768843.5 & \textbf{213768810.5} & 213768870.41 & 281.00 \\ 
410 & \textbf{243494269.0} & 243494328.0 & \textbf{243494269.0} & 243494641.72 & 369.69 \\ 
420 & \textbf{270756527.5} & \textbf{270756527.5} & \textbf{270756527.5} & 270756535.42 & 10.78 \\ 
430 & 286335267.5 & 286335891.5 & \textbf{286334521.5} & 286348075.92 & 6669.74 \\ 
440 & \textbf{301067264.5} & 301067654.5 & \textbf{301067264.5} & 301067273.31 & 19.39 \\ 
450 & \textbf{324488485.0} & 324488523.0 & \textbf{324488485.0} & 324489369.88 & 2049.17 \\ 
460 & \textbf{314884659.0} & 314885454.0 & \textbf{314884659.0} & 314884706.11 & 169.74 \\ 
470 & \textbf{379529990.0} & 379534882.0 & \textbf{379529990.0} & 379534565.81 & 1799.83 \\ 
480 & \textbf{366821074.0} & 366821213.0 & \textbf{366821074.0} & 366822075.00 & 1682.25 \\ 
490 & \textbf{413901954.5} & 413920947.5 & \textbf{413901954.5} & 413902025.31 & 117.69 \\ 
500 & \textbf{465570835.5} & 465578299.5 & \textbf{465570835.5} & 465578278.85 & 9470.54 \\ 
550 & \textbf{587090450.5} & 587090690.5 & \textbf{587090450.5} & 587092832.21 & 2812.25 \\ 
600 & 801567648.5 & 801570093.5 & \textbf{801567640.5} & 801593075.15 & 12851.39 \\ 
650 & 927512856.0 & 927519028.0 & \textbf{927512834.0} & 927526107.44 & 24563.40 \\ 
700 & 1158462254.0 & 1158462486.0 & \textbf{1158462242.0} & 1158523472.89 & 75030.83 \\ 
750 & \textbf{1438408836.5} & 1438627405.5 & \textbf{1438408836.5} & 1438417330.25 & 9244.16 \\ 
800 & 1861593391.0 & 1861722621.0 & \textbf{1861593361.0} & 1861628131.79 & 39080.83 \\ 
850 & 2126675923.0 & 2126880783.0 & \textbf{2126675899.0} & 2126726619.76 & 71482.59 \\ 
900 & 2600003561.0 & 2600420560.0 & \textbf{2600003520.0} & 2600132911.58 & 85329.60 \\ 
950 & 2993153592.5 & 2993414032.0 & \textbf{2993153552.5} & 2993175682.09 & 50930.53 \\ 
1000 & 3426646969.0 & 3426810369.0 & \textbf{3426646843.0} & 3426743204.14 & 158984.85 \\ 
   \hline
\end{tabular}
\caption{Comparison of objective function values for \texttt{Palubeckis-large} instances with current state of the art and robustness check}
\label{tab:palu_results}
\end{table}

Our approach successfully matches or improves the best known solutions in all 30 instances, improving upon 9 of them, including several of the largest problems. 
This demonstrates its ability to scale effectively to very large instances.
As in previous experiments, the method is highly consistent. The largest relative standard deviation across all runs is below $0.0065\%$, while for three instances, the standard deviation is exactly zero,
confirming the robustness of the window approach matheuristic.

\section{Conclusion and outlook}
\label{sec:conclusion}

In this work, we introduce a novel matheuristic approach for solving the 
\emph{single row facility layout problem (SRFLP)}.
The proposed method is built upon a new concept we denote as window approach, 
which uses the betweenness-based \emph{mixed-integer programming (MIP)}
formulation of 
the SRFLP.
This approach maintains the global facility ordering fixed except within a selected moving window, allowing the relative positioning of facilities inside the window to be optimized exactly
while respecting the fixed permutation outside. This targeted partial optimization leads to the definition of a new and powerful neighborhood structure,
which enables the algorithm to outperform existing state-of-the-art metaheuristics.
The MIP-based component is integrated into a broader matheuristic framework, which 
begins with a \emph{multi-start simulated annealing (MSA)}
metaheuristic that generates high-quality and diverse initial solutions. These solutions are subsequently refined using two complementary local search procedures,
both of which rely on the same efficient move evaluation formulas used for the MSA phase. This shared structure ensures fast evaluations,
especially in the later stages when fewer improving moves remain. 
The two local search strategies are combined with the window approach to explore a neighborhood that, to the best of our knowledge,
has not been investigated before in the SRFLP literature.
To assess the performance of the method, we conducted extensive experiments using 
the widely adopted \texttt{Anjos-large, Sko-large} and \texttt{Palubeckis-large} 
benchmark sets.
Our algorithm improves the best-known solution values for 17 
of 70 instances, and matches the best-known solution values for the remaining 53 instances. The results demonstrate the effectiveness of the window approach, particularly for larger instances with at least 500 facilities.

Beyond the SRFLP, our proposed window approach could have potential 
applications in a broader class of facility layout 
problems and other problems involving permutations where betweeness-based MIP 
formulations perform well, such as the minimum linear arrangement problem 
\citep{caprara2011optimal}, or problems which allow for asymmetric 
betweeness-based MIP such as 
discussed in \citet{mallach2023binary}. An 
integration of the window approach with other metaheuristics can also be an 
interesting topic for further research.

\section*{Acknowledgments}
This research was funded in whole, or in part, by the Austrian Science Fund 
(FWF)[P 35160-N]. For the purpose of open access, the author has applied a CC BY 
public copyright licence to any Author Accepted Manuscript version arising from 
this submission.
It is also supported by the Johannes Kepler University Linz, Institute of Technology (Project LIT-2021-10-YOU-216).
This research was also funded in part by the Austrian Science Fund (FWF) 10.55776/COE12.

\bibliographystyle{elsarticle-harv}

\bibliography{references}

\newpage
\appendix

\section{Proof: Transformation of the objective function}
\label{app:proof}

\begin{proof}[Proof: Transformation of the objective function]
    For any solution of the SRFLP, denoted as $\pi$, we can express the objective function using the betweenness-based MIP formulation as follows:

\begin{equation}
    \sum_{i=1}^{n-1} \sum_{j=i+1}^{n} w_{\pi_i, \pi_j} \left( \frac{\ell_{\pi_i} + \ell_{\pi_j}}{2} + \sum_{k=i+1}^{j-1} \ell_{\pi_k} \right)
    \label{eq:start_proof}
\end{equation}
Next, we use the indices $sw$ and $ew$, representing the positions of the first and last facility inside the window, respectively.
We then reorganize the summation indices in the objective function to ensure all facility pairs are still considered:

\begin{equation}
    \begin{split}
        \sum_{i=1}^{n-1} \sum_{j=i+1}^{n} w_{\pi_i, \pi_j} \left( \frac{\ell_{\pi_i} + \ell_{\pi_j}}{2} + \sum_{k=i+1}^{j-1} \ell_{\pi_k} \right) = \\
        \sum_{i=1}^{sw-2} \sum_{j=i+1}^{sw-1} w_{\pi_i, \pi_j} \left( \frac{\ell_{\pi_i} + \ell_{\pi_j}}{2} + \sum_{k=i+1}^{j-1} \ell_{\pi_k} \right) + \\
        \sum_{i=1}^{sw-1} \sum_{j=sw}^{ew} w_{\pi_i, \pi_j} \left( \frac{\ell_{\pi_i} + \ell_{\pi_j}}{2} + \sum_{k=i+1}^{j-1} \ell_{\pi_k} \right) + \\
        \sum_{i=1}^{sw-1} \sum_{j=ew+1}^{n} w_{\pi_i, \pi_j} \left( \frac{\ell_{\pi_i} + \ell_{\pi_j}}{2} + \sum_{k=i+1}^{j-1} \ell_{\pi_k} \right) + \\
        \sum_{i=sw}^{ew-1} \sum_{j=i+1}^{ew} w_{\pi_i, \pi_j} \left( \frac{\ell_{\pi_i} + \ell_{\pi_j}}{2} + \sum_{k=i+1}^{j-1} \ell_{\pi_k} \right) + \\
        \sum_{i=sw}^{ew} \sum_{j=ew+1}^{n} w_{\pi_i, \pi_j} \left( \frac{\ell_{\pi_i} + \ell_{\pi_j}}{2} + \sum_{k=i+1}^{j-1} \ell_{\pi_k} \right) + \\
        \sum_{i=ew+1}^{n-1} \sum_{j=i+1}^{n} w_{\pi_i, \pi_j} \left( \frac{\ell_{\pi_i} + \ell_{\pi_j}}{2} + \sum_{k=i+1}^{j-1} \ell_{\pi_k} \right)
    \end{split}
    \label{eq:proof_seperate_indexes}
\end{equation}

Examining equation \eqref{eq:proof_seperate_indexes}, the first and last summands are entirely independent of the permutation inside the window.
The fourth summand, on the other hand, is fully dependent on the permutation inside the window. Additionally, by introducing $\ell_w = \sum_{i = sw}^{ew} \ell_i$, we can rewrite the third summand as:

\begin{equation}
    \begin{split}
        \sum_{i=1}^{sw-1} \sum_{j=ew+1}^{n} w_{\pi_i, \pi_j} \left( \frac{\ell_{\pi_i} + \ell_{\pi_j}}{2} + \sum_{k=i+1}^{j-1} \ell_{\pi_k} \right) = \\
        \sum_{i=1}^{sw-1} \sum_{j=ew+1}^{n} w_{\pi_i, \pi_j} \left( \frac{\ell_{\pi_i} + \ell_{\pi_j}}{2} + \sum_{k=i+1}^{sw-1} \ell_{\pi_k} + \ell_w + \sum_{k=ew+1}^{j-1} \ell_{\pi_k} \right)
    \end{split}
    \label{eq:split_l_3}
\end{equation}

Since the total length of the facilities inside the window $\ell_w$, does not depend on the permutation inside the window, it follows that this summand remains independent of the permutation inside the window.
Furthermore, summands two and five still depend on the permutation within the window. To analyze them further, we decompose these distances in the following equations.
First, we split the second summand of equation \eqref{eq:proof_seperate_indexes}:
\begin{equation}
    \begin{split}
        \sum_{i=1}^{sw-1} \sum_{j=sw}^{ew} w_{\pi_i, \pi_j} \left( \frac{\ell_{\pi_i} + \ell_{\pi_j}}{2} + \sum_{k=i+1}^{j-1} \ell_{\pi_k} \right) = \\
        \sum_{i=1}^{sw-1} \sum_{j=sw}^{ew} w_{\pi_i, \pi_j} \left( \frac{\ell_{\pi_i}}{2} + \sum_{k=i+1}^{sw-1} \ell_{\pi_k} \right) + \\
        \sum_{i=1}^{sw-1} \sum_{j=sw}^{ew} w_{\pi_j, \pi_i} \left( \frac{\ell_{\pi_j}}{2} + \sum_{k=sw}^{j-1} \ell_{\pi_k} \right)
    \end{split}
    \label{eq:split_l_2}
\end{equation}
From equation \eqref{eq:split_l_2}, it is now clear that the first term in the decomposed sum is independent of the permutation within the window.
We than also seperate the fifth summand of equation \eqref{eq:proof_seperate_indexes}:
\begin{equation}
    \begin{split}
        \sum_{i=sw}^{ew} \sum_{j=ew+1}^{n} w_{\pi_i, \pi_j} \left( \frac{\ell_{\pi_i} + \ell_{\pi_j}}{2} + \sum_{k=i+1}^{j-1} \ell_{\pi_k} \right) = \\
        \sum_{i=sw}^{ew} \sum_{j=ew+1}^{n} w_{\pi_i, \pi_j} \left( \frac{\ell_{\pi_i}}{2} + \sum_{k=i+1}^{ew} \ell_{\pi_k} \right) + \\
        \sum_{i=sw}^{ew} \sum_{j=ew+1}^{n} w_{\pi_j, \pi_i} \left( \frac{\ell_{\pi_j}}{2} + \sum_{k=ew+1}^{j-1} \ell_{\pi_k} \right)
    \end{split}
    \label{eq:split_l_5}
\end{equation}

Similar from equation \eqref{eq:split_l_5}, it is now also clear that the second term in the decomposed sum is independent of the permutation within the window.
Substituting equations \eqref{eq:split_l_3} - \eqref{eq:split_l_5} into \eqref{eq:proof_seperate_indexes}, we obtain:
\begin{equation}
    \begin{split}
        \sum_{i=1}^{sw-2} \sum_{j=i+1}^{sw-1} w_{\pi_i, \pi_j} \left( \frac{\ell_{\pi_i} + \ell_{\pi_j}}{2} + \sum_{k=i+1}^{j-1} \ell_{\pi_k} \right) + \\
        \sum_{i=1}^{sw-1} \sum_{j=sw}^{ew} w_{\pi_i, \pi_j} \left( \frac{\ell_{\pi_i}}{2} + \sum_{k=i+1}^{sw-1} \ell_{\pi_k} \right) + \\
        \sum_{i=1}^{sw-1} \sum_{j=sw}^{ew} w_{\pi_j, \pi_i} \left( \frac{\ell_{\pi_j}}{2} + \sum_{k=sw}^{j-1} \ell_{\pi_k} \right) + \\
        \sum_{i=1}^{sw-1} \sum_{j=ew+1}^{n} w_{\pi_i, \pi_j} \left( \frac{\ell_{\pi_i} + \ell_{\pi_j}}{2} + \sum_{k=i+1}^{sw-1} \ell_{\pi_k} + \ell_w + \sum_{k=ew+1}^{j-1} \ell_{\pi_k} \right) + \\
        \sum_{i=sw}^{ew-1} \sum_{j=i+1}^{ew} w_{\pi_i, \pi_j} \left( \frac{\ell_{\pi_i} + \ell_{\pi_j}}{2} + \sum_{k=i+1}^{j-1} \ell_{\pi_k} \right) + \\
        \sum_{i=sw}^{ew} \sum_{j=ew+1}^{n} w_{\pi_i, \pi_j} \left( \frac{\ell_{\pi_i}}{2} + \sum_{k=i+1}^{ew} \ell_{\pi_k} \right) + \\
        \sum_{i=sw}^{ew} \sum_{j=ew+1}^{n} w_{\pi_j, \pi_i} \left( \frac{\ell_{\pi_j}}{2} + \sum_{k=ew+1}^{j-1} \ell_{\pi_k} \right) + \\
        \sum_{i=ew+1}^{n-1} \sum_{j=i+1}^{n} w_{\pi_i, \pi_j} \left( \frac{\ell_{\pi_i} + \ell_{\pi_j}}{2} + \sum_{k=i+1}^{j-1} \ell_{\pi_k} \right)
    \end{split}
    \label{eq:split_obj}
\end{equation}
In equation \eqref{eq:split_obj}, the weighted distances have now been separated into components that are either dependent or independent of the permutation within the window.
Consequently, in the next step, we can aggregate all weighted distances that are independent of the permutation, as they can be precomputed in advance and therefore do not need to be considered
when solving the MIP model using the window approach. The aggregation of the fixed weighted distances is shown in equation~\ref{eq:fixed}.


In the final step we set in $fixed(sw, ew, \pi)$ into equation \eqref{eq:split_obj}:
\begin{equation}
    \begin{split}
        \text{fixed(sw, ew, } \pi\text{)} + \sum_{i=1}^{sw-1} \sum_{j=sw}^{ew} w_{\pi_j, \pi_i} \left( \frac{\ell_{\pi_j}}{2} + \sum_{k=sw}^{j-1} \ell_{\pi_k} \right) + \\
        \sum_{i=sw}^{ew-1} \sum_{j=i+1}^{ew} w_{\pi_i, \pi_j} \left( \frac{\ell_{\pi_i} + \ell_{\pi_j}}{2} + \sum_{k=i+1}^{j-1} \ell_{\pi_k} \right) + \\
        \sum_{i=sw}^{ew} \sum_{j=ew+1}^{n} w_{\pi_i, \pi_j} \left( \frac{\ell_{\pi_i}}{2} + \sum_{k=i+1}^{ew} \ell_{\pi_k} \right) = \\
        \sum_{i=1}^{n-1} \sum_{j=i+1}^{n} w_{\pi_i, \pi_j} \left( \frac{\ell_{\pi_i} + \ell_{\pi_j}}{2} + \sum_{k=i+1}^{j-1} \ell_{\pi_k} \right),
    \end{split}
    \label{eq:final_obj}
\end{equation}

where $fixed(sw, ew, \pi)$ represents the precomputed independent components for the current window location in the current solution.
\end{proof}
This transformation confirms that optimizing the permutation inside the window only requires considering the dependent terms,
significantly reducing computational complexity in the MIP model. The distinction between independent and dependent weighted distances is visually illustrated in figure \ref{fig:graph_srflp_all},
where black lines represent independent distances and green lines denote distances influenced by the permutation inside the window.

\section{Additional pseudocodes}
\label{app:appendixB}

\subsection{Implemention of the MSA}
\label{subsec:MSA}
The main procudure of the MSA, which not only follows the implementation shown by \citet{palubeckis2017single} is implemented in
algorithm \ref{alg:MSA}. Note that in contrast to \citep{palubeckis2017single} our MSA implementation contains the \texttt{LS\_wind} algorithm.

In our MSA algorithm, the SA method is repeatedly executed with randomly generated starting solutions until the stopping criterion is met.
The starting solutions are random permutations of the facilities, as this approach is recommended by \citet{palubeckis2017single}.
The core structure of the SA algorithm consists of two nested for loops, commonly referred to as the "outer" and "inner" loops. The outer for loop
controls the temperature while the inner loop generates moves a fixed number of times $\hat{z} \cdot n$ for each temperature level.
In each iteration of the outer loop the temperature gets cooled down by the factor $\alpha$. Before we enter the outer for loop we first need to compute the maximum temperature $T_{max}$ for each starting solution. 
To compute $T_{max}$ we compute the maximum absolute move gain for a swap of facilities of the random starting solution.
We than fix $T_{min}$ to 0.0001 and get the number of outer loops, which is computed as follows: $\beta' = (log(T_{min}) - log(T_{max}))/log(\alpha)$ \citep{palubeckis2017single}.
The most important parameter, which manages the probability of a swap move, is set to $p$. Therefore the probability for an insertion move is equal 1 - $p$.
With these fixed parameters we can already enter the inner loop for each temperature, starting with $T = T_{max}$. 
First the move type is selected randomly. 
Then we compute the move gain $\delta$ for the selected move type.
The detailed implementations of these functions are shown by \citet{palubeckis2017single}.
These functions return a gain value $\delta$. In case $\delta < 0$ the move is accepted, otherwise the probability for the acceptance is computed as shown in \citet{palubeckis2017single}.
It can also be observed that at the beginning of the SA approach, when the temperature is still high, the probability of accepting a move that increases the objective value,
and thus promotes diversification, is relatively high.
To ensure time efficience in computing and performing the moves we set the parameter $\gamma$, which controls until which percentage of the cooling parameter the first computation method is used, equal to 0.35.
As explained in \citet{palubeckis2017single} this parameter decides on which computation method to use for the given $\beta$ value.
If the moves are accepted we apply the selected move using the detailed implementation shown by \citet{palubeckis2017single}.
A more detailed explanation of the adapted MSA algorithm for the window approach can be found in \citet{pammer2025matheuristic}.
If the objective value obtained after a SA phase is lower a treshhold, which is dependent on the current best objective value, then the current solution enters $LS\_wind$ where the solution is further improved using the LS methods in combination with the innovative window approach.
In this setup, the window size vector $wsv$ is set to (13), and the Boolean parameter $gb$ is set to \texttt{false}, meaning that the local search algorithm is not restarted after the first improvement found
within the window approach. This configuration is chosen to enhance time efficiency.
The SA procedure is repeated until the predefined time limit is reached, and it returns the best overall solution along with its corresponding objective value.

\begin{algorithm}[ht]
\caption{\texttt{MSA}}
\label{alg:MSA}
\KwIn{$\alpha$, $\hat{z}$, $p$, $T_{min}$, $\gamma$, $wsv$, $gb$}
\KwOut{$\pi^*$, $F(\pi^*)$}
generate a random starting solution $\pi$\;
$\pi^{'}, \pi^{*} \gets \pi$\;
$T_{max} \gets \texttt{compute\_T\_max}(\pi)$\;
$\beta' \gets \texttt{compute\_beta\_prime}(T_{max}, T_{min}, \alpha)$\;
\While{$comp\_time < max\_comp\_time$}{
    $T \gets T_{max}$\;
    \For{$\beta \gets 1$ \KwTo $\beta'$}{
        \For{$z \gets 1$ \KwTo $\hat{z} \cdot n$}{
            generate $rn \sim \mathcal{U}(0, 1)$\;
            \eIf{$rn < p$}{
                randomly select facilities $r$ and $s$\;
                generate move gain $\delta$ of swapping facility r and s\;
            }{
                randomly select facility $r$ and position $l$\;
                generate move gain $\delta$ of inserting facility r in position l\;
            }
            \eIf{$\delta < 0$}{
                $accept$ $\gets$ true\;
            }{
                generate $acc\_prob \sim \mathcal{U}(0, 1)$\;
                \eIf{$acc\_prob < \exp(-\delta/T)$}{
                    $accept \gets$ true\;
                }{
                    $accept \gets$ false\;
                }
            }
            \If{$accept$}{
                $F(\pi^{'}) \gets F(\pi^{'}) + \delta$\;
                \eIf{$rn < p$}{
                    swap positions of facility $r$ and $s$ in the solution $\pi^{'}$\;
                }{
                    insert facility $r$ on position $l$ in the solution $\pi^{'}$\;
                }
                \If{$F(\pi^{'}) < F(\pi)$}{
                    $\pi \gets \pi^{'}$\;
                }
            }
        }
        $T \gets \alpha \cdot T$\;
    }
    \If{$F(\pi) < F(\pi^{*}) \cdot 1.00001$}{
        $\pi \gets \texttt{LS\_wind}(\pi, F(\pi), wsv, gb)$\;
    }
    \If{$F(\pi) < F(\pi^{*})$}{
        $\pi^* \gets \pi$\;
    }
    generate a new random starting solution $\pi$\;
}
\Return{$\pi^*$, $F(\pi^{*})$}\;
\end{algorithm}
\clearpage

\subsection{Create order of three dimensional decision variables}
\label{subsec: create_order}

The algorithm \texttt{create\_order} (see algorithm~\ref{alg:order}) determines the order of facilities within a window, based on the three-dimensional decision variable $x_{i,k,j}$ and the window size $ws$.

\begin{algorithm}[h!t]
\caption{\texttt{create\_order}}
\label{alg:order}
\KwIn{$x_{i,k,j}$, $ws$}
\KwOut{$order$}

\For{$i \gets 1$ \KwTo $ws - 1$}{
    \For{$j \gets i+1$ \KwTo $ws$}{
        $b_{i,j} \gets \sum_{\substack{k=1 \\ k \ne i \\ k \ne j}}^{ws} x_{i,k,j}$\;
    }
}
$s, e \gets \arg\max(b)$\;
$order[s] \gets 1$\;

\For{$j \gets 1$ \KwTo $s - 1$}{
    $order[j] \gets \sum_{\substack{k=1 \\ k \ne if \\ k \ne j}}^{ws} x_{j,k,s} + 2$\;
}
\For{$j \gets s + 1$ \KwTo $ws$}{
    $order[j] \gets \sum_{\substack{k=1 \\ k \ne if \\ k \ne j}}^{ws} x_{s,k,j} + 2$\;
}
\Return{$order$}\;
\end{algorithm}

In the first step, this function calculates the number of facilities that lie between each pair of facilities. The underlying logic is that the pair with the highest number of intermediate facilities
must be located at the borders of the window.
Therefore, in the next step, we identify and assign the pair with the most facilities between them to the variables $s$ and $e$, representing the two endpoints of the ordering.
Subsequently, the $s^{\text{th}}$ entry is set to one, indicating that the facility originally located at position $s$ is now assigned to the first position in the window.
The remaining ordering is then determined by examining how many facilities lie between the first facility and each of the others. More precisely, the second facility in the sequence is defined
as the one with zero facilities in between itself and the first facility.
This logic is extended by assigning an ordering index equal to the number of facilities between the first and any other facility, plus two, to account for the position shift.
The resulting vector provides the order of the facilities within the window. However, it is important to note that $s$ could correspond to either the start or the end of the true ordering.
For this reason, the window approach must also include a check for the direction of the permutation to ensure consistency with the overall solution structure.

\subsection{Compute fixed weighted distances}
\label{subsec: fix_w_dist}

\begin{algorithm}[h!t]
\caption{\texttt{calc\_out\_w\_dist}}
\label{alg:outwdist}
\KwIn{$\pi$, $sw$, $ew$}
\KwOut{$wd$}
$wb_{wind} \gets$ cumulative weights for facilities before the window to all facilities inside the window\;
\For{$i \gets 1$ \KwTo $sw - 2$}{
    \For{$j \gets i+1$ \KwTo $sw - 1$}{
        $wd \gets wd + w[\pi_i, \pi_j] \cdot \left( \frac{\ell_{\pi_i} + \ell_{\pi_j}}{2} + \ell_{betw}[i] \right)$\;
        $\ell_{betw}[i] \gets \ell_{betw}[i] + \ell_{\pi_j}$\;
    }
    $wd \gets wd + wb_{wind}[i] \cdot \left( \ell_{betw}[i] + \frac{\ell_{\pi_i}}{2} \right)$\;
}
$wd \gets wd + wb_{wind}[sw - 1] \cdot \frac{\ell_{\pi_{sw - 1}}}{2}$\;

$\ell_{wind} \gets \sum_{i=sw}^{ew} \ell_{\pi_i}$\;

\For{$i \gets 1$ \KwTo $sw - 1$}{
    $\ell_{aft} \gets 0$\;
    \For{$j \gets ew + 1$ \KwTo $n$}{
        $wd \gets wd + w[\pi_i, \pi_j] \cdot \left( \frac{\ell_{\pi_i} + \ell_{\pi_j}}{2} + \ell_{wind} + \ell_{aft} + \ell_{betw}[i] \right)$\;
        $\ell_{aft} \gets \ell_{aft} + \ell_{\pi_j}$\;
    }
}

$wa_{wind} \gets$ cumulative weights for facilities after the window to all facilities inside the window\;

$\ell_{aft} \gets \sum_{i=ew+1}^{n} \ell_{\pi_i}$\;

\For{$i \gets ew+1$ \KwTo $n-1$}{
    \For{$j \gets i+1$ \KwTo $n$}{
        $wd \gets wd + w[\pi_i, \pi_j] \cdot \left( \frac{\ell_{\pi_i} + \ell_{\pi_j}}{2} + \ell_{betw}[i-ew] \right)$\;
        $\ell_{betw}[i-ew] \gets \ell_{betw}[i-ew] + \ell_{\pi_j}$\;
    }
    $wd \gets wd + wa_{wind}[i-ew] \cdot \left( \ell_{aft} - \frac{\ell_{\pi_i}}{2} - \ell_{betw}[i-ew] \right)$\;
}

$wd \gets wd + wa_{wind}[n - ew] \cdot \left( \ell_{aft} - \frac{\ell_{\pi_n}}{2} - \ell_{betw}[n - ew] \right)$\;

\Return{$wd$}\;
\end{algorithm}

The \texttt{calc\_out\_w\_dist} algorithm (see algorithm~\ref{alg:outwdist}) begins by computing the cumulative weights from each facility before the window to all facilities inside the window.
Subsequently, all weighted distances between pairs of facilities located before the window are added.
To ensure time-efficient computation, we use the vector $\ell_{betw}$, which stores the cumulative lengths of the facilities involved.

Next, the weighted distances between the facilities before the window and the facilities inside the window, for the part of the distance up to the starting point of the window, are added.
Before computing the distances between facilities before and after the window, the total length of all facilities inside the window is calculated. This value is required as it contributes to the distance between each pair of facilities before and after the window.

The weighted distances between pairs of facilities before and after the window are then computed and added to the total sum $wd$. These are calculated also using the previously computed $\ell_{betw}$ vector to maintain computational efficiency.
To further optimize this step, we introduce the variable $\ell_{aft}$, which stores the cumulative lengths of facilities after the window.

In the final step, the function computes the weighted distances between all pairs of facilities after the window, as well as the partial weighted distances from the ending point of the window to the facilities inside it.
This involves computing the cumulative weights from each facility after the window to all facilities inside the window, along with the total length of the facilities after the window, again stored in $\ell_{aft}$.

Finally, the function returns the total fixed weighted distance of the current solution.

\end{document}

